\numberwithin{equation}{section}
\newtheorem{theorem}{Theorem}[section]
\newtheorem{lemma}[theorem]{Lemma}
\newtheorem{proposition}[theorem]{Proposition}
\newtheorem{problem}[theorem]{Problem}
\title{Cycles with two blocks in $k$-chromatic digraphs}
\author{
Ringi Kim\footnote{Department of Combinatorics and Optimization, University of Waterloo, Waterloo, Canada. E-mail: ringikim2@gmail.com.}
~~~~~
Seog-Jin Kim\footnote{Department of Mathematics Education, Konkuk University, Seoul, Republic of Korea. E-mail: skim12@konkuk.ac.kr. This research was supported by Basic Science Research Program through the National Research Foundation of Korea(NRF) funded by the Ministry of Education(NRF-2015R1D1A1A01057008).}
~~~~~
Jie Ma\footnote{School of Mathematical Science,
University of Science and Technology of China, Hefei,
P.R. China. E-mail: jiema@ustc.edu.cn. Research partially supported by NSFC projects 11501539 and 11622110.}
~~~~~
Boram Park\footnote{Department of mathematics, Ajou University, Suwon, Republic of Korea. E-mail: borampark@ajou.ac.kr.
This research was supported by Basic Science Research Program through the National Research Foundation of Korea (NRF) funded by the Ministry of  Science, ICT \& Future Planning (NRF-2015R1C1A1A01053495).}
}
\begin{document}

\maketitle

\begin{abstract}
Let $k$ and $\ell$ be positive integers. A {\it cycle with two blocks} $c(k,\ell)$ is an oriented cycle
which consists of two internally (vertex) disjoint directed paths of lengths at least $k$ and $\ell$,
respectively, from a vertex to another one.
A problem of Addario-Berry, Havet and Thomass\'e \cite{AHT} asked if, given positive integers $k$ and $\ell$ such that $k+\ell\ge 4$,
any strongly connected digraph $D$ containing no $c(k,\ell)$ has chromatic number at most $k+\ell-1$.
In this paper, we show that such digraph $D$ has chromatic number at most $O((k+\ell)^2)$,
improving the previous upper bound $O((k+\ell)^4)$ of \cite{CHLN}. In fact, we are able to find a digraph
which shows that the answer to the above problem of \cite{AHT} is no.
We also show that if in addition $D$ is Hamiltonian, then its underlying simple graph is $(k+\ell-1)$-degenerate and thus the chromatic number of $D$ is at most $k+\ell$, which is tight.
\end{abstract}

\noindent\textbf{Keywords:} Digraph coloring; Chromatic number; Cycle with two blocks; Strongly connected digraph

\bigskip

\noindent\textbf{2010 Mathematics Subject Classification:} 05C15, 05C20

\section{Introduction}
Throughout this paper, all graphs $G$ and digraphs $D$ are {\it simple},
that is, there are no loops and no multiple edges in $G$ or $D$ (though a pair of opposite arcs is allowed in $D$).
Unless otherwise specified, by a path, a walk or a cycle in a digraph $D$ we {\it always} mean a directed one.
The \textit{length} $|P|$ of a walk $P$ is the number of arcs it contains.
An \textit{orientation} of a graph $G$ is a digraph obtained by giving a direction to each edge of $G$,
and the \textit{underlying graph} of a digraph $D$ is the simple graph obtained by ignoring the directions of the arcs. 
The chromatic number $\chi(D)$ of a digraph $D$ is the chromatic number of its underlying graph.
And we say $D$ is {\it $n$-chromatic} if $\chi(D)=n$.

A classic theorem of Gallai and Roy \cite{Gallai,Roy} says that any $n$-chromatic digraph contains a path with $n$ vertices.
This motivates the study of {\it $n$-universal} digraphs, that is, digraphs contained in any $n$-chromatic digraphs.
It is known that $n$-universal digraphs must be oriented trees (an orientation of an undirected tree), and
Burr \cite{Burr} conjectured that every oriented tree of $n$ vertices is $(2n-2)$-universal,
which is remained to be open (for more information see \cite{EI04-2,HT91,HT00,HT00-2}).
For positive integers $k$ and $\ell$, a {\it path with two blocks} $P(k,\ell)$ is an orientation of the undirected path of $k + \ell + 1$ vertices having two maximal paths, that is, either starting with $k$ forward arcs followed by $\ell$ backward arcs, or starting with $k$ backward arcs followed by $\ell$ forward arcs.
In \cite{EI04}, El-Sahili conjectured that every path of $n\ge 4$ vertices with two blocks is $n$-universal.
El-Sahili and Kouider \cite{EK} proved that every such path is $(n+1)$-universal,
and then Addario-Berry, Havet, and Thomass\'e \cite{AHT} confirmed the conjecture.

For positive integers $k$ and $\ell$, a {\it cycle with two blocks} $c(k,\ell)$ (or we call it a {\it 2-block cycle}) is a digraph obtained by an orientation of an undirected cycle, which consists of two internally (vertex) disjoint paths of lengths at least $k$ and $\ell$, respectively, from a vertex to another one.
A natural question is to ask if a digraph with high chromatic number can contain a 2-block cycle $c(k,\ell)$.
In \cite{BW}, Benhocine and Wojda proved that every tournament of $n\ge 4$ vertices
contains a 2-block cycle $c(k,\ell)$ for any positive integers $k,\ell$ with $k+\ell=n$.
However, for general digraphs the answer is no: as shown by Gy\'arf\'as and Thomassen (see \cite{AHT}), there exist digraphs
with arbitrary large chromatic number which contain no cycles with two blocks.
A digraph $D$ is \textit{strongly connected} (or \textit{strong}, for short) if for any two vertices
$u$ and $v$ of $D$, there are a path from $u$ to $v$ and a path from $v$ to $u$.
The authors of \cite{AHT} noticed that the digraphs found by Gy\'arf\'as and Thomassen are not strongly connected,
and they proposed the following problem on cycles with two blocks for strongly connected digraphs.
\begin{problem}[Addario-Berry, Havet, and Thomass\'e, \cite{AHT}]\label{prob}
Let $D$ be an $n$-chromatic strongly connected digraph, $n\ge4$, and
let $k$ and $\ell$ be positive integers such that $k +\ell = n$.
Does such $D$ contain a 2-block cycle $c(k,\ell)$?
\end{problem}

This problem also can be viewed as an extension of the following classic theorem of Bondy \cite{Bondy},
which asserts the statement for cycles. (A cycle can be considered as a 2-block cycle $c(k,0)$.)
\begin{theorem}[Bondy, \cite{Bondy}]\label{thm:bondy}
Any strong digraph $D$ contains a cycle of length at least $\chi(D)$.
\end{theorem}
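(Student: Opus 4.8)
The statement is equivalent to: if $D$ is strong and every directed cycle of $D$ has length at most $c$, then $\chi(D)\le c$. The engine is the method behind the Gallai--Roy theorem, which I would isolate as a lemma: if $H$ is any maximal (under inclusion of arcs) acyclic subdigraph of $D$ and $\ell(v)$ is the number of vertices on a longest directed path of $H$ ending at $v$, then $v\mapsto\ell(v)$ is a proper coloring of the underlying graph of $D$. For an arc $uv\in H$ we get $\ell(v)>\ell(u)$ by appending $v$ to a longest $H$-path ending at $u$ (such a path cannot already contain $v$, else $H$ would have a cycle). For an arc $uv\in D\setminus H$, maximality gives a directed $v$--$u$ path $S$ in $H$, and since $H$ is acyclic $S$ meets a longest $H$-path $P_v$ ending at $v$ only at $v$; so $P_v$ followed by $S$ is a directed path of $H$ ending at $u$ with strictly more than $\ell(v)$ vertices, i.e.\ $\ell(u)>\ell(v)$. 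In either case the two colors differ, and the number of colors used equals the number of vertices on a longest directed path of $H$.

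So it suffices to find a maximal acyclic subdigraph $H\subseteq D$ containing no directed path on $c+1$ vertices. I would construct $H$ using a longest directed cycle $C=v_0v_1\cdots v_{c-1}v_0$ as a scaffold: first put the directed path $v_0v_1\cdots v_{c-1}$ into $H$ (discarding only the arc $v_{c-1}v_0$), and then add the remaining arcs of $D$ one at a time, each time only when acyclicity is preserved, processing them in an order that respects their position relative to $C$ (chords and short ears first), until $H$ is maximal. The point of the scaffold is that $C$ calibrates how far a directed path of $H$ can wind before it is forced to close up.

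The crux --- and the step I expect to be the main obstacle --- is to prove that this $H$ has no directed path on $c+1$ vertices. The mechanism should be an exchange/rerouting argument: given such a path $P=u_0u_1\cdots u_c$ in $H$, use strong connectivity to take a shortest directed path $R$ in $D$ from $u_c$ to $u_0$; then, using that $H$ was grown from $C$ (so that $P$ cannot fold back on itself through a discarded arc), one argues that $P$ together with $R$, or a suitable re-routing of part of $P$ along $C$, yields a directed cycle longer than $c$, contradicting the maximality of $C$. Granting this structural claim, the lemma gives $\chi(D)\le c$ at once. Finally the bound is sharp: the complete graph on $n$ vertices with each edge doubled into a pair of opposite arcs is strong, has chromatic number $n$, and has no directed cycle of length greater than $n$.
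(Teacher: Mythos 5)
The paper does not prove this statement --- it is quoted as Bondy's theorem --- so you are being measured against the known argument rather than anything in the text. Your first half is fine: the Gallai--Roy lemma that the level function $\ell$ of a maximal acyclic subdigraph $H$ is a proper coloring of the underlying graph of $D$, using exactly as many colors as the order of a longest path of $H$, is correct as you state and prove it. But the entire content of Bondy's theorem sits in the structural claim you label ``the crux'' and then grant: that one can choose a maximal acyclic $H$ whose longest path has at most $c$ vertices. As written, your construction does not establish this. ``Add the remaining arcs one at a time, each time only when acyclicity is preserved'' is order-sensitive, and for some orders it fails outright: take $D$ to be the bidirected path $x\leftrightarrow y\leftrightarrow z$, so the circumference is $c=2$ and the scaffold from the longest cycle $x\to y\to x$ is the single arc $(x,y)$; adding $(y,z)$ before $(z,y)$ produces the maximal acyclic subdigraph $\{(x,y),(y,z)\}$, whose longest path has $3>c$ vertices, and the resulting coloring uses $3$ colors. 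So everything hinges on the unspecified processing order --- ``chords and short ears first'' does not even decide which arc of the ear $y\to z\to y$ to insert --- and the correct rule, which must take into account the relative positions on the already-built structure of the two endpoints of each ear, is precisely the part of Bondy's proof that requires work.

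The closing ``exchange/rerouting'' sketch also does not go through as stated. If $P=u_0u_1\cdots u_c$ is a path of $H$ on $c+1$ vertices and $R$ is a shortest $u_c$--$u_0$ path in $D$, the closed walk obtained by concatenating $P$ and $R$ certainly contains directed cycles, but $R$ may re-enter $P$ at a vertex $u_i$ after only a few steps, and the cycle $u_iPu_c$ followed by the initial segment of $R$ can easily have length at most $c$; there is no a priori reason any cycle extracted from the closed walk is long. Forcing a cycle longer than $c$ requires exploiting exactly which arcs were discarded when $H$ was grown from $C$ and why --- the bookkeeping you have omitted. In short, you have correctly identified the standard reduction (Gallai--Roy on a well-chosen maximal acyclic subdigraph) and correctly located the difficulty, but the difficulty itself is left unproved, so the argument is incomplete.
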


We mention that a different extension of Bondy's theorem was obtained in \cite{CMZ}.
Very recently, among other results, Cohen, Havet, Lochet, and Nisse \cite{CHLN} firstly obtained a finite upper bound of $\chi(D)$
for strong digraphs $D$ containing no $c(k,\ell)$. Precisely, they proved the following:

\begin{theorem}[Cohen, Havet, Lochet, and Nisse, \cite{CHLN}]\label{thm:O(k4)}
Let $k$ and $\ell$ be integers such that $k\ge \ell \ge 2$ and $k\ge4$, and
 $D$  a strong digraph with no 2-block cycle $c(k,\ell)$.
Then
\[\chi(D) \le (k +\ell-2)(k +\ell-3)(2\ell+2)(k +\ell+1).\]
\end{theorem}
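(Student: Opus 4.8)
The plan is to prove the contrapositive: assuming $D$ is strong and has no $c(k,\ell)$ (with $k\ge\ell\ge2$ and $k\ge4$), I will bound $\chi(D)$. First I fix a vertex $r$ and let $f(v)$ be the distance in $D$ from $r$ to $v$; strong connectivity makes this finite, so $f$ partitions $V(D)$ into levels $L_0=\{r\},L_1,\dots,L_m$. Let $T$ be a shortest-path out-arborescence rooted at $r$, so $f$ is its depth and every vertex $x$ carries a directed $T$-path $P_x$ from $r$ of length $f(x)$. Since $f(v)\le f(u)+1$ for every arc $uv$, arcs are of three kinds: forward ($f(v)=f(u)+1$), level ($f(v)=f(u)$), and back ($f(v)<f(u)$).

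The engine is that non-tree arcs create $2$-block cycles. Write $w=w(u,v)$ for the least common ancestor of $u,v$ in $T$. If $f(v)\le f(u)$ and $v$ is not an ancestor of $u$, the $T$-path $w\to v$ and the $T$-path $w\to u$ extended by the arc $uv$ are internally disjoint directed paths from $w$ to $v$, of lengths $f(v)-f(w)$ and $f(u)-f(w)+1$ (the latter strictly larger); as $D$ has no $c(k,\ell)$ and $k\ge\ell$,
\[
f(u)-f(w)\le k-2\qquad\text{or}\qquad f(v)-f(w)\le\ell-1 .
\]
The analogous construction for a forward non-tree arc gives two internally disjoint $w\to v$ paths of the same length, hence $f(v)-f(w)\le k-1$. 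Consequently every level arc $uv$ has $f(w(u,v))\ge f(u)-(k-2)$, and every back arc $uv$ with $f(u)-f(w)\ge k-1$ has $f(w(u,v))\ge f(v)-(\ell-1)$, so its tail $u$ sits in the $T$-subtree below one of the $\le\ell$ closest ancestors of its head $v$. (A back arc from a vertex to one of its own ancestors gives no constraint here and is treated separately.)

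From here I would build a proper colouring of the underlying graph of $D$ whose colour at $v$ is a tuple. The first coordinate is $f(v)$ modulo an integer $m_0$ of order $k+\ell$; this separates the ends of every forward arc and of every back arc whose span is not a multiple of $m_0$. What remains — the level arcs and the back arcs of large span — join, by the inequality above, vertices that share an ancestor of $T$ lying within $O(k+\ell)$ levels of an endpoint. For those I add coordinates recording, at a suitably coarse scale along $T$, the relevant near-ancestor of $v$ together with a label resolving level arcs inside one scale-window. Bounding the number of values of each coordinate by roughly $k+\ell$ — invoking the Gallai--Roy theorem and Theorem~\ref{thm:bondy} on the sub-pieces where a longest-path or long-cycle witness is needed — and taking the product yields a bound of the stated order $(k+\ell-2)(k+\ell-3)(2\ell+2)(k+\ell+1)$.

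The main obstacle is the last step: showing those \emph{local} pieces really have chromatic number $O(k+\ell)$. This does not follow from $c(k,\ell)$-freeness alone — the $2$-block cycles read directly off the level arcs inside a band of fewer than $k$ levels are too short to contain $c(k,\ell)$. The way through is to re-root the layering inside each subtree and iterate the entire argument at the smaller scale, combining the arcs into and out of a piece (guaranteed by strong connectivity) with a longest directed path inside it to manufacture a genuine $c(k,\ell)$ unless the piece already has small chromatic number. Organising this bounded-depth induction, and then checking that the assembled coordinates simultaneously properly colour every arc of $D$, is the technical heart; the slack lost there is precisely what makes the final bound quartic rather than quadratic, and removing it is the business of the present paper.
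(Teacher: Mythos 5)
First, a point of orientation: the paper does not prove this statement at all. Theorem~\ref{thm:O(k4)} is quoted from \cite{CHLN}, and the authors' own contribution (Theorem~\ref{thm:main:general}) obtains a stronger, quadratic bound by an entirely different mechanism (iteratively contracting longest cycles, building a spanning cycle-tree of each preimage, and proving degeneracy), so there is no in-paper proof to match your argument against. Your BFS-levelling scheme is in the general spirit of \cite{CHLN}, but as written it is not a proof.

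The genuine gap is the one you yourself flag in your final paragraph. The levelling and the least-common-ancestor inequalities are fine as far as they go: for a non-tree arc $(u,v)$ with $f(v)\le f(u)$ and $v$ not an ancestor of $u$, the two internally disjoint $w\to v$ paths of lengths $f(v)-f(w)$ and $f(u)-f(w)+1$ do force $f(u)-f(w)\le k-2$ or $f(v)-f(w)\le \ell-1$. But the entire content of the theorem lies in the step you defer: showing that the ``local pieces'' (vertices in a band of fewer than $k$ consecutive levels, or hanging below a common near-ancestor) have chromatic number $O(k+\ell)$. You correctly observe that $c(k,\ell)$-freeness alone gives no handle on them, and your proposed fix --- ``re-root the layering inside each subtree and iterate the entire argument at the smaller scale'' --- is not an argument: the induced pieces need not be strongly connected, so neither Theorem~\ref{thm:bondy} nor the levelling machinery re-applies to them, and without strong connectivity a $c(k,\ell)$-free digraph can have arbitrarily large chromatic number (the Gy\'arf\'as--Thomassen examples cited in the introduction). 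Any correct proof must exploit strong connectivity of the \emph{whole} digraph to control these pieces, for instance by routing paths through vertices outside the piece, and that is exactly what is missing. Finally, the four specific factors $(k+\ell-2)(k+\ell-3)(2\ell+2)(k+\ell+1)$ are never derived; ``taking the product yields a bound of the stated order'' is an assertion, not a computation, so even the shape of the final bound is not established.
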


In this paper, we improve the above upper bound $O((k+\ell)^4)$ to $O((k+\ell)^2)$
by using a quite different approach from \cite{CHLN}.
The following is our main result.

\begin{theorem}\label{thm:main:general}
Let $k$ and $\ell$ be integers such that $k\ge \ell\ge 1$ and $k\ge 2$, and
 $D$  a strong digraph with no 2-block cycle $c(k,\ell)$.
Then
  \[\chi(D) \le 2(2k-3)(k+2\ell-1)< 12k^2. \]
\end{theorem}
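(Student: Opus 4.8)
The plan is to pass to a breadth-first spanning structure of $D$ and use it to translate the absence of $c(k,\ell)$ into the absence of a \emph{path} with two blocks inside small slices of $D$, at which point the theorem of Addario-Berry, Havet and Thomass\'e --- that a path with two blocks on $m$ vertices is $m$-universal --- does the work. Fix an arbitrary vertex $r$; since $D$ is strong it has a spanning out-branching, and I take a BFS one, $T$, rooted at $r$, with level function $d\colon V(D)\to\{0,1,2,\dots\}$ and $d(r)=0$, so that every arc $(u,v)$ satisfies $d(v)\le d(u)+1$. The gadget I use repeatedly is: for vertices $x,y$ with lowest common ancestor $q$ in $T$, the tree-paths $q\rightsquigarrow x$ and $q\rightsquigarrow y$ are directed, internally disjoint, and all their internal vertices lie in levels strictly between $d(q)$ and $\max\{d(x),d(y)\}$; in particular if $x,y$ lie in a common level $i$, these tree-paths have equal length $i-d(q)\ge 1$ with all internal vertices below level $i$.

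The first substantive step bounds the slices: $\chi(D[L_i])\le k+2\ell-1$ for every level $i$. If not, then (assuming $\ell\ge 2$ and $k+\ell\ge 5$, the finitely many remaining cases being disposed of directly) the theorem of Addario-Berry, Havet and Thomass\'e gives inside $D[L_i]$ a path with two blocks $P(k-1,\ell-1)$, that is, internally disjoint directed paths $Q_1$ of length $k-1$ and $Q_2$ of length $\ell-1$ from vertices $s$ and $s'$ into a common vertex $t$, all lying in $L_i$. Let $q=\mathrm{lca}_T(s,s')$, and prepend the tree-path $q\rightsquigarrow s$ to $Q_1$ and $q\rightsquigarrow s'$ to $Q_2$. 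Since $Q_1,Q_2\subseteq L_i$ while the internal vertices of the two tree-paths lie below level $i$, and the two tree-paths are internally disjoint, the two resulting directed paths from $q$ to $t$ are internally disjoint, of lengths $(i-d(q))+(k-1)\ge k$ and $(i-d(q))+(\ell-1)\ge\ell$; this is a $c(k,\ell)$ in $D$, a contradiction.

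It then remains to control how far an arc can reach downward and to assemble a coloring from these slices. The target is that every arc $(u,v)$ satisfies $d(u)-d(v)<2(2k-3)$; granting this, color each vertex $v$ by the pair $\bigl(d(v)\bmod 2(2k-3),\ \varphi_{d(v)}(v)\bigr)$, where $\varphi_i$ is a proper $(k+2\ell-1)$-coloring of $D[L_i]$ from the previous paragraph. An arc then either stays inside a single level, where $\varphi$ separates its ends, or crosses levels whose difference has absolute value strictly below the modulus $2(2k-3)$, where the first coordinate separates them; this yields $\chi(D)\le 2(2k-3)(k+2\ell-1)<12k^2$. The gadget handles those arcs $(u,v)$ whose endpoints have a lowest common ancestor far above $v$: feeding $q\rightsquigarrow u$ followed by the arc $uv$ against $q\rightsquigarrow v$ into the gadget produces a $c(k,\ell)$ once $d(u)-d(v)$ is large. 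But when the lowest common ancestor of $u$ and $v$ is shallow --- in particular when $v$ is itself an ancestor of $u$, so that the gadget collapses to a directed cycle --- this produces nothing.

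That gap is the step I expect to be the main obstacle: bounding, or otherwise circumventing, the downward reach of an arc whose endpoints have a shallow common ancestor. There the tree by itself is useless, and one must exploit that $\chi(D)$ is large on the band of levels such an arc spans; the aim is to extract from that large chromatic number a pair of internally disjoint directed paths, between a common pair of vertices, of lengths $\ge k$ and $\ge\ell$ --- a genuine two-block cycle rather than merely a long directed cycle --- and so forbid the long arc. Making this precise, and tuning it against the slice bound so that the modulus may be taken as small as $2(2k-3)$, is where the quadratic constant is earned; the remaining small-parameter cases and the bookkeeping for the final coloring are then routine.
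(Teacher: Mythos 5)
Your first step is sound: BFS levels from a root of a strong digraph, plus the Addario-Berry--Havet--Thomass\'e theorem applied inside a level $L_i$, followed by grafting the two tree-paths from the lowest common ancestor onto the two blocks of $P(k-1,\ell-1)$, correctly yields $\chi(D[L_i])\le k+\ell-2$ (for $k+\ell\ge 5$; the grafted paths are internally disjoint because the tree-path interiors lie strictly below level $i$). This is a genuinely different opening from the paper, which never uses BFS levels at all.

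However, the gap you flag is not a technical loose end but a fatal defect of the scheme as proposed. The coloring $v\mapsto(d(v)\bmod 2(2k-3),\,\varphi_{d(v)}(v))$ is proper only if every arc $(u,v)$ satisfies $d(u)-d(v)<2(2k-3)$, and that statement is simply false: take $D$ to be a single directed cycle $v_0\to v_1\to\cdots\to v_N\to v_0$ with $N$ huge. BFS from $v_0$ gives $d(v_i)=i$, and the closing arc $(v_N,v_0)$ descends $N$ levels. More importantly, this cannot be repaired by ``exploiting that $\chi$ is large on the band the arc spans,'' because in this example $\chi$ is $2$ or $3$ on every band; long back-arcs along tree-paths carry no chromatic information, and by Bondy's theorem long directed cycles are forced in any strong $D$ with large $\chi$, so they cannot be forbidden the way $c(k,\ell)$ can. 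Any fix must change the coloring scheme itself, not merely supply a missing lemma. This is exactly the difficulty the paper's proof is organized around: instead of BFS, it iteratively contracts a \emph{longest} cycle while $\chi\ge 2k-2$ (so each contracted cycle has length $\ge 2k-2$), reducing $\chi(D)$ to $(2k-3)$ times the chromatic number of a preimage $F$; it then shows $F$ carries a spanning cycle-tree whose cycles all have length $\ge 2k-2$, proves via the longest-cycle choice and the absence of $c(k,\ell)$ that every ``external'' arc of $F$ has very short return paths in the cycle-tree (Lemma~\ref{lem:short-backward-path}), and finally combines a $2$-coloring killing some external arcs with a $(k+2\ell-2)$-degeneracy argument built on the Hamiltonian case (Theorem~\ref{thm:Hamiltonian}). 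In short, the long directed cycles that break your level function are precisely the objects the paper contracts away, and without an analogous mechanism your argument does not reach the stated bound.
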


As a key step and a result of independent interest,
we consider Hamiltonian digraphs and obtain the following tight result.
A digraph $D$ is {\it Hamiltonian}, if it contains a {\it Hamiltonian cycle}, that is, a cycle passing through all vertices of $D$. For a positive integer $d$, a digraph $D$ is {\it $d$-degenerate}, if its underlying graph $G$ is $d$-degenerate,
that is, any subgraph of $G$ contains a vertex having at most $d$ neighbors (or having degree at most $d$) in it.
Note that if a graph $G$ is $d$-generate, then it is $(d+1)$-colorable.

\begin{theorem}\label{thm:Hamiltonian}
Let $k$ and $\ell$ be positive integers such that $k+\ell\ge 3$, and $D$ a Hamiltonian digraph with no 2-block cycle $c(k,\ell)$.
Then $D$ is $(k+\ell-1)$-degenerate, which implies that $\chi(D)\le k+\ell$.
\end{theorem}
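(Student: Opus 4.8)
The plan is to prove the degeneracy statement; the bound $\chi(D)\le k+\ell$ then follows by the greedy colouring of a $(k+\ell-1)$-degenerate graph noted in the text. We may assume $k\ge\ell$.

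\medskip
\noindent\emph{Reduction to a one-vertex statement.} It suffices to show that every induced subgraph $H$ of $D$ has a vertex of underlying-degree at most $k+\ell-1$. Fix a Hamiltonian cycle $C=v_1v_2\cdots v_nv_1$ of $D$. The vertex set $S$ of $H$ inherits a cyclic order $w_0,w_1,\dots,w_{m-1}$ from $C$, and consecutive vertices $w_t,w_{t+1}$ are joined by a directed subpath $R_t$ of $C$ whose interior is disjoint from $S$, the $R_t$ being pairwise internally disjoint. Contracting each $R_t$ to a single arc $w_t\to w_{t+1}$ yields a digraph $G$ on $S$ that is Hamiltonian, contains every arc of $H$, and contains no $c(k,\ell)$: expanding the contracted arcs back into the paths $R_t$ converts any $c(k,\ell)$ of $G$ into a $2$-block cycle of $D$ with both blocks at least as long, hence a $c(k,\ell)$ of $D$. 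Since the underlying graph of $H$ is a subgraph of that of $G$ on the same vertex set, a vertex of $G$ of degree at most $k+\ell-1$ has degree at most $k+\ell-1$ in $H$. So it is enough to prove: every Hamiltonian digraph $G$ on $m$ vertices with no $c(k,\ell)$ has a vertex of underlying-degree at most $k+\ell-1$.

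\medskip
\noindent\emph{Gadgets.} If $m\le k+\ell$ this is trivial; assume $m\ge k+\ell+1$ and let $C=v_0v_1\cdots v_{m-1}v_0$ be the Hamiltonian cycle of $G$ (if $G$ has no chord then $G=C$ is $2$-degenerate, and we are done). Call the clockwise $C$-distance $d(a,b)$ the \emph{span} of an arc $(v_a,v_b)$. Two observations drive the argument: (i) a chord $(v_a,v_b)$ together with the clockwise $C$-path from $v_a$ to $v_b$ forms a $c(d(a,b),1)$, so when $\ell=1$ every chord has span at most $k-1$; and (ii) if $(v_a,v_b)$ and $(v_c,v_d)$ are chords with $v_c$ strictly inside, and $v_d$ strictly outside, the clockwise arc from $v_a$ to $v_b$, then $v_a\to v_b\rightsquigarrow_C v_d$ and $v_a\rightsquigarrow_C v_c\to v_d$ are internally disjoint directed paths from $v_a$ to $v_d$ of lengths $1+d(b,d)$ and $1+d(a,c)$, which forces a $c(k,\ell)$ as soon as both lengths are at least $\ell$ and one of them is at least $k$. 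Reversing every arc of $G$ gives the mirror statements of (i) and (ii).

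\medskip
\noindent\emph{The crux, and the obstacle.} Suppose $G$ has no vertex of degree at most $k+\ell-1$, so that every vertex carries at least $k+\ell-2$ chords. The plan is to take a chord $e^*$ of maximum span $\sigma$, normalise it to $e^*=(v_0,v_\sigma)$, and study the chords incident to $v_0$ and $v_1$: maximality of $\sigma$ confines the free endpoints of these chords to two short clockwise windows, while the degree hypotheses at $v_0$, $v_1$ and near $v_\sigma$ force many such chords to be present; pairing $e^*$ with a suitable out-chord at $v_1$ or in-chord at $v_0$ then produces a clockwise-crossing pair as in (ii) (or its mirror) whose two block lengths satisfy the requirement of (ii), a contradiction. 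The main obstacle is exactly this construction: one has to secure, simultaneously, internal vertex-disjointness of the two paths, the correct cyclic positions once arcs wrap past $v_0$, and one block of length at least $k$ together with the other of length at least $\ell$; this seems to require a case split on the in/out types of the chords used and on the size of $\sigma$ relative to $k+\ell$ and to $m$, with the case $\ell=1$ handled separately using the strong restriction from (i). Finally, tightness of $\chi(D)\le k+\ell$ is witnessed by a small explicit Hamiltonian digraph without a $c(k,\ell)$ and with chromatic number $k+\ell$ (for instance a directed odd cycle when $k+\ell=3$).
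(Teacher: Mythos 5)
Your reduction is sound and is essentially a repackaging of what the paper does: the paper deletes a low-degree vertex and re-closes the Hamiltonian cycle with one new arc (undoing it by a length-two path to rule out a new $c(k,\ell)$), while you contract the cycle segments between consecutive vertices of an induced subgraph; both correctly reduce the theorem to the single statement that a Hamiltonian digraph with no $c(k,\ell)$ has a vertex of underlying degree at most $k+\ell-1$. Your observations (i) and (ii) are also correct and amount to special cases of the paper's crossing lemma (Lemma~\ref{lem:crossing}).

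The problem is that this reduced statement \emph{is} the theorem: it is exactly the contrapositive of Lemma~\ref{lem:Ham-delta}, which carries all the work of Section~2, and you do not prove it. Your final paragraph is a plan (``take a chord of maximum span, confine the endpoints to two windows, produce a crossing pair''), and you yourself flag that the internal disjointness, the wrap-around positions, and the simultaneous length constraints ``seem to require a case split'' that you have not carried out. That case analysis is precisely where the difficulty lives: note that crude consequences of (i) alone only bound degrees by roughly $2(k-1)$, not $k+\ell-1$, and the tournament of Figure~\ref{T} shows the bound is exactly attained, so no slack is available. The paper's execution is also genuinely different from your sketch: rather than a maximum-span chord, it first shows no chord has span exactly $k+1$ or $\ell+1$ (Claim~1), then takes a chord $v_0v_r$ with $r\ge k+1$ \emph{minimal}, uses that minimality together with the degree hypothesis to pin down the closed neighborhoods of $v_{r-1}$ and $v_{r-2}$ exactly (Claim~2), and only then derives crossing pairs whose forced orientations collide. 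As written, your proposal establishes the easy outer layer but leaves the core lemma unproved.
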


When $k=\ell=1$, if a Hamiltonian digraph $D$ has no 2-block cycle $c(k,\ell)$, then $D$ is an induced cycle and so  $\chi(D)\le 3=k+\ell+1$ and the equality holds only when $D$ is an odd cycle.
To see the tightness of Theorem~\ref{thm:Hamiltonian}, we consider the strong tournament $T$ in Figure~\ref{T},
which contains no $c(4,1)$ and has $\chi(T)=5$.
We remark that this tournament $T$ answers Problem~\ref{prob} negatively (for $k=4$ and $\ell=1$).

\begin{figure}
\centering
 \begin{tikzpicture} [>=triangle 45,mydeco/.style = {decoration = {markings,mark = at position #1 with {\arrow{<}}}} ]
     \path (-1.5,0) coordinate (1) (0,1.2) coordinate (2)
     (1.5,0) coordinate (3) (0.8,-1.5) coordinate (4)
     (-0.8,-1.5) coordinate (5);
     \draw[->] (1) to (2); \draw[->] (2) to (1.52,0.02);\draw[->] (3) to (4);\draw[->] (5) to (1);\draw[->] (4) to (5); \draw[->] (4) to (2); \draw[->] (1) to (3); \draw[->] (1) to (4); \draw[->] (2) to (-0.86,-1.45); \draw[->] (3) to (-0.84,-1.47);
       \fill (1) node[left]{\footnotesize$v_1$}
             (2) node[above]{\footnotesize$v_{2}$}
             (3) node[right]{\footnotesize$v_{3}$}
             (4) node[below]{\footnotesize$v_{4}$}
             (5)  node[below]{\footnotesize$v_{5}$};
        \fill (1) circle (2pt)
              (2) circle (2pt)  (4) circle (2pt)
              (3) circle (2pt) (5) circle (2pt);
              \end{tikzpicture}
\caption{A tight example for Theorem~\ref{thm:Hamiltonian}}\label{T}\end{figure}

We introduce some basic notations and terminologies.
Let $D$ be a digraph. For two walks $P$ and $Q$ of $D$, if the terminal vertex
of $P$ and the starting vertex of $Q$ are the same, then we denote by $P+Q$  the walk through $P$ and then $Q$.
For a cycle $C$ of $D$ and any two vertices $u$ and $v$ on $C$, we denote by $uCv$ the subpath of $C$ from $u$ to $v$ along $C$.
For $S\subseteq V(D)$, we denote by $D[S]$ the induced subdigraph of $D$ on the vertex set $S$.
Let $G$ be a graph.
For a vertex $u$ of $G$, the {\it neighborhood} $N_G(u)$ of $u$ contains all neighbors of $u$ in $G$, and
the {\it closed neighborhood} $N_G[u]$ of $u$ is defined by $N_G[u]:=N_G(u)\cup \{u\}$.
In addition, we denote by
$ \delta(G) =\min_{u\in V(G)} |N_G(u)|$.
For a positive integer $k$, denote $[k]:=\{1,2,...,k\}$.

The paper is organized as follows. In Section 2, we prove Theorem~\ref{thm:Hamiltonian} for Hamiltonian digraphs.
And in Section 3, we complete the proof of Theorem~\ref{thm:main:general} for general strong digraphs.

\section{Hamiltonian Digraphs: Proof of Theorem~\ref{thm:Hamiltonian}}
We devote this section to prove Theorem~\ref{thm:Hamiltonian}.
Throughout this section, if $D$ is a digraph, $G$ is its underlying graph, and $S\subset A(D)$, then we allow, with slight abuse of notation, to denote by $E(S)$ for the set of edges of $G$ obtained by ignoring the directions of arcs of $S$.
We begin with the following useful lemma, which will be iteratively applied later.

\begin{lemma}\label{lem:crossing}
Let  $D$ be a digraph which has a Hamiltonian cycle $C$, and $G$ its underlying graph.
Suppose that $u, v, x, y$ are four distinct vertices such that $uv,xy\in E(G)\setminus E(C)$ such that $x\in V(uCv)$ and $y\in V(vCu)$ (see Figure~\ref{fig-lem:crossing}).
For positive integers $k$ and $\ell$,
if $|uCx|\ge k-1$ and $|vCy|\ge \ell-1$, then $D$ contains a 2-block cycle $c(k,\ell)$, unless one of the following occurs:
\begin{itemize}
\item[(a)] $|uCx|=k-1$ and $(u,v), (y,x)\in A(D)$, or
\item[(b)] $|vCy|=\ell-1$ and $(v,u), (x,y)\in A(D)$.
\end{itemize}
\end{lemma}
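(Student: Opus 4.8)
The hypothesis places $uv$ and $xy$ as two \emph{crossing} chords of the Hamiltonian cycle $C$: reading $C$ from $u$ we meet $u$, then $x$, then $v$, then $y$, and since the four vertices are distinct the vertex sets $V(uCx)$ and $V(vCy)$ are disjoint. My plan is to produce, from the arcs of $C$ lying on $uCx$ and on $vCy$ together with the two chord-arcs on $uv$ and $xy$, two internally vertex-disjoint directed paths with a common source and a common sink — one path whose internal vertices lie in $V(uCx)$ and the other whose internal vertices lie in $V(vCy)$ — of lengths at least $k$ and at least $\ell$; the displayed disjointness then makes internal disjointness automatic, and checking that the two common endpoints are respectively a source and a sink of the union certifies that the union is a $2$-block cycle. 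Which paths are available is dictated only by the orientations of the two chords; since $uv,xy\in E(G)$, at least one of $(u,v),(v,u)$ and at least one of $(x,y),(y,x)$ lies in $A(D)$. I would first record that the relabelling $(u,v,x,y,k,\ell)\mapsto(v,u,y,x,\ell,k)$ preserves every hypothesis of the lemma while interchanging alternatives (a) and (b), so that it suffices to treat the two cases in which $(v,u)\in A(D)$.

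If $(v,u),(y,x)\in A(D)$, I would take $P_1$ to be the arc $(v,u)$ followed by $uCx$ and $P_2$ to be $vCy$ followed by the arc $(y,x)$: both are directed paths from $v$ to $x$, of lengths $1+|uCx|\ge k$ and $|vCy|+1\ge\ell$ respectively, their internal vertices lie in the disjoint sets $V(uCx)$ and $V(vCy)$, and at $v$ both paths leave $v$ while at $x$ both enter $x$; hence $P_1\cup P_2$ is a $c(k,\ell)$.

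If $(v,u),(x,y)\in A(D)$, the two chord-arcs are forced to run the same way and must be used on a single side: I would take $P_1$ to be the arc $(v,u)$, then $uCx$, then the arc $(x,y)$ — a directed path from $v$ to $y$ of length $2+|uCx|\ge k+1$ — and $P_2=vCy$, a directed path from $v$ to $y$ of length $|vCy|\ge\ell-1$; again the internal vertices of the two paths lie in the disjoint sets $V(uCx)$ and $V(vCy)$, and $v,y$ are a source and a sink of $P_1\cup P_2$. If $|vCy|\ge\ell$ this is a $c(k,\ell)$; if instead $|vCy|=\ell-1$ then, as $(v,u),(x,y)\in A(D)$, alternative (b) holds. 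Running the two arguments above through the relabelling then settles the remaining cases $(u,v),(x,y)\in A(D)$ (yielding $c(k,\ell)$) and $(u,v),(y,x)\in A(D)$ (yielding $c(k,\ell)$ or alternative (a)), which exhausts all possibilities.

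The one place needing genuine care is the off-by-one accounting on block lengths. When $(v,u),(y,x)\in A(D)$ each of $uCx$ and $vCy$ can be lengthened by exactly one chord-arc, raising their lengths from $\ge k-1$ and $\ge\ell-1$ to $\ge k$ and $\ge\ell$, so nothing is lost and no exception is needed; but when $(v,u),(x,y)\in A(D)$ the chord orientations pile both chord-arcs onto a single side, leaving the other block equal to $vCy$, which is one short of length $\ell$ exactly in the excluded configuration (b) — this is the source of the exceptions. Everything else is routine: one checks that each constructed walk is genuinely a path (no repeated vertex) and that $P_1\cup P_2$ is an oriented cycle with precisely two blocks, both of which follow immediately from the cyclic order $u,x,v,y$ and the distinctness of $u,v,x,y$.
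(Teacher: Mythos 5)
Your proof is correct and follows essentially the same route as the paper's: a case analysis on the orientations of the two chords $uv$ and $xy$, constructing in each case two internally disjoint directed paths from the cycle-arcs on $uCx$ and $vCy$ plus the chord-arcs, with the same length bookkeeping identifying exceptions (a) and (b). The only difference is that you invoke the relabelling symmetry $(u,v,x,y,k,\ell)\mapsto(v,u,y,x,\ell,k)$ to halve the case analysis, whereas the paper simply writes out all four orientation cases explicitly.
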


\begin{figure}[h!]
\begin{subfigure}{.25\textwidth}\centering
    \begin{tikzpicture} [scale=0.65, >=triangle 45,mydeco/.style = {decoration = {markings,mark = at position #1 with {\arrow{<}}}} ]
     \path (0:0) coordinate (0)
           (0:2cm) coordinate (y) (-90:2cm) coordinate (v)
           (100:2cm) coordinate (u) (150:2cm) coordinate (x);
     \draw (0) circle (2cm);
     \path (x)edge (y); \path (u)edge (v);
       \fill (x) node[left]{\footnotesize$u$}
             (y) node[right]{\footnotesize$v$}
             (u) node[above]{\footnotesize$x$}
             (v)  node[below]{\footnotesize$y$};
        \fill (v) circle (3pt)
              (x) circle (3pt)
              (y) circle (3pt)
              (u) circle (3pt);
    \draw[postaction = {mydeco=-0.0416 ,decorate},
           postaction = {mydeco=0.5 ,decorate}, postaction = {mydeco=0.35 ,decorate}, postaction = {mydeco=0.2 ,decorate}] (0:0) circle (2cm);
\draw[-]  (u) to [bend right=75] node[left] {\tiny length$\ge k-1$} (x);
\draw[-]  (y) to [bend left=95] node[right] {\tiny length$\ge \ell-1$} (v);
  \fill (-90:3cm) node[below]{};
\end{tikzpicture}
    \end{subfigure} \qquad    \quad
    \begin{subfigure}{.25\textwidth}\centering
    \begin{tikzpicture} [scale=0.65, >=triangle 45,mydeco/.style = {decoration = {markings,mark = at position #1 with {\arrow{<}}}} ]
     \path (0:0) coordinate (0)
           (0:2cm) coordinate (y) (-90:2cm) coordinate (v)
           (100:2cm) coordinate (u) (150:2cm) coordinate (x);
     \draw (0) circle (2cm);
     \draw[->] (x) to (y); \draw[->] (v) to (u);
       \fill (x) node[left]{\footnotesize$u$}
             (y) node[right]{\footnotesize$v$}
             (u) node[above]{\footnotesize$x$}
             (v)  node[below]{\footnotesize$y$};
        \fill (v) circle (3pt)
              (x) circle (3pt)
              (y) circle (3pt)
              (u) circle (3pt);
    \draw[postaction = {mydeco=-0.0416 ,decorate},
           postaction = {mydeco=0.5 ,decorate}, postaction = {mydeco=0.35 ,decorate}, postaction = {mydeco=0.2 ,decorate}] (0:0) circle (2cm);
\draw[-]  (u) to [bend right=75] node[left] {\tiny length$=k-1$} (x);
\draw[-]  (y) to [bend left=95] node[right] {\tiny length$\ge \ell-1$} (v); \fill (-90:3cm) node[below]{(a)};
\end{tikzpicture}
    \end{subfigure}   \qquad    \quad
    \begin{subfigure}{.25 \textwidth}\centering
    \begin{tikzpicture} [scale=0.65, >=triangle 45,mydeco/.style = {decoration = {markings,mark = at position #1 with {\arrow{<}}}} ]
     \path (0:0) coordinate (0)
           (0:2cm) coordinate (y) (-90:2cm) coordinate (v)
           (100:2cm) coordinate (u) (150:2cm) coordinate (x);
     \draw (0) circle (2cm);
\draw[->] (y) to (x); \draw[->] (u) to (v);
       \fill (x) node[left]{\footnotesize$u$}
             (y) node[right]{\footnotesize$v$}
             (u) node[above]{\footnotesize$x$}
             (v)  node[below]{\footnotesize$y$};
        \fill (v) circle (3pt)
              (x) circle (3pt)
              (y) circle (3pt)
              (u) circle (3pt);
    \draw[postaction = {mydeco=-0.0416 ,decorate},
           postaction = {mydeco=0.5 ,decorate}, postaction = {mydeco=0.35 ,decorate}, postaction = {mydeco=0.2 ,decorate}] (0:0) circle (2cm);
\draw[-]  (u) to [bend right=75] node[left] {\tiny length$\ge k-1$} (x);
\draw[-]  (y) to [bend left=95] node[right] {\tiny length$=\ell-1$} (v);
\fill (-90:3cm) node[below]{(b)};
\end{tikzpicture}  \end{subfigure}
\caption{Figures for Lemma~\ref{lem:crossing}}\label{fig-lem:crossing}\end{figure}

\begin{proof}
Note that there are four difference cases of the directions of the edges $uv$ and $xy$.
If $(u,v), (x,y)\in A(D)$,
then $uCx+(x,y)$ and $(u,v)+vCy$ are internally disjoint paths from $u$ to $y$ of length  $|uCx|+1$ and $|vCy|+1$, respectively.
If $(u,v), (y,x)\in A(D)$,
then $uCx$ and $(u,v)+vCy+(y,x)$ are internally disjoint paths from $u$ to $x$ of length  $|uCx|$ and $|vCy|+2$, respectively.
If $(v,u), (x,y)\in A(D)$,
then $(v,u)+uCx+(x,y)$ and $vCy$ are internally disjoint paths from $v$ to $y$ of length  $|uCx|+2$ and $|vCy|$, respectively.
If $(v,u), (y,x)\in A(D)$,
then $(v,u)+uCx$ and $vCy+(y,x)$ are internally disjoint paths from $v$ to $x$ of length $|uCx|+1$ and $|vCy|+1$, respectively.
Now it is easy to verify the conclusion, under the above observations.\end{proof}

The following lemma will be essential for Theorem~\ref{thm:Hamiltonian}.

\begin{lemma}\label{lem:Ham-delta}
Let $k$ and $\ell$ be positive integers such that $k+\ell\ge 3$.
Let $D$ be a Hamiltonian digraph and $G$ its underlying  graph.
If $\delta(G)\ge k+\ell$, then $D$ contains a 2-block cycle $c(k,\ell)$.
\end{lemma}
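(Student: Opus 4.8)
The plan is to fix a Hamiltonian cycle $C=v_1v_2\cdots v_nv_1$ of $D$ and, assuming $D$ has no $c(k,\ell)$, to derive a contradiction with $\delta(G)\ge k+\ell$. The idea is to use Lemma~\ref{lem:crossing} as the engine: every pair of ``crossing'' chords (chords $uv$ and $xy$ with $x\in V(uCv)$, $y\in V(vCu)$) that spans enough of the cycle must be blocked by one of the exceptional configurations (a) or (b), which are very restrictive — they force specific arc directions and force one of the two arcs of $C$ between the relevant endpoints to have length exactly $k-1$ or $\ell-1$. So the strategy is to pick a vertex, say $v_1$, look at its chords, and show that the exceptional cases cannot simultaneously block all the crossings that $v_1$'s many chords create, unless $n$ is too small — and the small cases will have to be handled by hand.

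Concretely, here is how I would carry it out. First, since $\delta(G)\ge k+\ell$, every vertex has at least $k+\ell$ neighbors, hence at least $k+\ell-2$ chords (neighbors off $C$) if $n$ is large, and in any case enough chords to work with. I would first dispose of the degenerate possibility $n\le k+\ell$: if $n\le k+\ell$ then $\delta(G)\ge k+\ell\ge n$ forces $G$ to be complete, $G=K_n$ with $n\ge 3$; a complete digraph on $\ge 3$ vertices with all the chords present plainly contains $c(k,\ell)$ whenever $k+\ell\le n$ — just route two internally disjoint directed paths of the required lengths, which is an easy direct construction (or one may invoke that a tournament on $\ge 4$ vertices has every $c(k,\ell)$, per Benhocine–Wojda, after orienting missing... no: here there are no missing edges, both arcs present, so it is even easier). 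So from now on $n\ge k+\ell+1$. Next, I would consider a vertex $u=v_1$ and enumerate its chords; walking along $C$ from $u$, let the first chord-neighbor at distance $\ge k-1$ be reached, and somewhere past it a chord whose other endpoint sits at distance $\ge\ell-1$ on the return arc. For a suitable pair of chords incident to $u$ (or to two close vertices) we get the crossing hypothesis of Lemma~\ref{lem:crossing} with room to spare, i.e.\ strict inequalities $|uCx|>k-1$ and $|vCy|>\ell-1$, which kills both exceptions (a) and (b) and produces $c(k,\ell)$ directly. The burden is to show that a high-degree vertex must possess such a pair of chords; this is a counting/pigeonhole argument: there are $\ge k+\ell$ neighbors of $u$ distributed around a cycle of length $n\ge k+\ell+1$, and only a bounded number of ``short'' neighbors (within distance $k-2$ on one side or $\ell-2$ on the other) can fail, so some two neighbors land in the ``deep'' zones on opposite sides, giving the strict inequalities.

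The main obstacle I expect is exactly the boundary bookkeeping: making sure that when the only available chords land right at distance $k-1$ or $\ell-1$ (the equality cases), the exceptional arc-orientations in (a) and (b) are genuinely unavoidable only in a bounded-size configuration, and then ruling those out. I would handle this by a second round of applying Lemma~\ref{lem:crossing} to a slightly shifted pair of chords: if chords $ux$ and $vy$ are blocked only by case (a) — forcing $(u,v),(y,x)\in A(D)$ with $|uCx|=k-1$ — then $x$ (or $v$) also has $k+\ell$ neighbors, and one of its chords, combined with a chord at $u$ or $y$, should create a new crossing whose equality case contradicts the orientation just forced, or yields $c(k,\ell)$ outright. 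Iterating this ``the exceptions propagate and eventually conflict'' argument around the cycle, using that $n\ge k+\ell+1$ gives at least one extra vertex of slack each time, should close the proof; the care needed is in bounding how many steps the propagation takes and confirming it terminates before wrapping around, which is where I would spend most of the effort.
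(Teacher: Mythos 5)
Your proposal correctly identifies Lemma~\ref{lem:crossing} as the engine and the overall shape of the argument (produce crossing chords, note that the exceptional cases force rigid orientations, and derive a contradiction), and this is indeed the spirit of the paper's proof. However, the central step is missing, and the step you propose in its place is doubtful. You claim that a pigeonhole count on the $\ge k+\ell$ neighbors of a vertex yields a pair of crossing chords with the \emph{strict} inequalities $|uCx|\ge k$ and $|vCy|\ge \ell$, killing both exceptions at once. First, note that two chords incident to the same vertex $u$ do not form a valid input to Lemma~\ref{lem:crossing}, which needs four distinct vertices and two disjoint chords $uv$, $xy$; your parenthetical ``or to two close vertices'' is exactly where the work lies. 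Second, and more seriously, the extremal configurations one must worry about are those in which every vertex's neighborhood is an interval of $C$ around it (this is precisely the structure the paper's Claims 1 and 2 show must arise, e.g.\ $N_G[v_k]=V(v_0Cv_{k+\ell})$); in such configurations all chords are short and one can check (e.g.\ for a circulant with connection set $\{1,\dots,(k+\ell)/2\}$ when $k=\ell$) that every applicable crossing sits exactly at the boundary $|uCx|=k-1$ or $|vCy|=\ell-1$. So strict crossings need not exist, the equality cases are not a fringe nuisance but the whole problem, and your ``propagation of exceptions'' fallback — which you yourself describe as the part needing most of the effort, with no termination argument — is where the entire proof actually lives.

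For comparison, the paper does not use a global counting argument at all. It first proves (Claim 1) that no vertex has a neighbor at forward cycle-distance exactly $k+1$ or $\ell+1$, via several applications of Lemma~\ref{lem:crossing} that pin down exact closed neighborhoods such as $N_G[v_k]=V(v_0Cv_{k+\ell})$ together with forced arc orientations; it then selects a chord $v_0v_r$ with $|v_0Cv_r|\ge k+1$ minimal (whence $r\ge k+2$ by Claim 1), determines the exact neighborhoods of $v_{r-1}$ and $v_{r-2}$ (Claim 2), and finally plays four specific chords against each other so that the orientations forced by the exceptional cases of Lemma~\ref{lem:crossing} become mutually contradictory. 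Your sketch would need to reconstruct essentially all of this structural analysis to be complete; as written it has a genuine gap. (A minor side remark: the case $n\le k+\ell$ you treat separately is vacuous, since $\delta(G)\ge k+\ell$ already forces $n\ge k+\ell+1$ in a simple graph.)
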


\begin{proof}
Suppose for a contradiction that $D$ has no 2-block cycle $c(k,\ell)$. Let $C=v_0, v_1,..., v_{n-1}, v_0$ be a Hamiltonian cycle of $D$, where $n=|V(D)|\ge k+\ell+1$.


\bigskip

{\bf Claim 1:} There is no $r$ such that either $v_r v_{r+k+1} \in E(G)$ or $v_r v_{r+\ell+1} \in E(G)$.

\begin{proof}[Proof of Claim 1.]
Suppose that such $r$ exists. By symmetry between $k$ and $\ell$, we may assume that $v_0 v_{k+1} \in E(G)$.
Note that $v_0$, $v_1$, $\ldots$, $v_{k+\ell}$  are distinct vertices in $G$, since $k+\ell +1 \le |V(G)|$.
Thus  $v_0 v_{k+1}$ is an edge not in $E(C)$ if $|V(G)|=n>k+2$.
To reach the final contradiction, we prove a series of assertions (1.1) $\sim$ (1.4) as follows (see Figure~\ref{fig:claim1} for illustration).

\medskip

\begin{figure}\centering
\begin{subfigure}{.25\textwidth}\centering
 \begin{tikzpicture} [scale=0.7, >=triangle 45,mydeco/.style = {decoration = {markings,mark = at position #1 with {\arrow{<}}}} ]
     \path (0:0) coordinate (0)  (160:2cm) coordinate (vn)
           (145:2cm) coordinate (v0)  (50:2cm) coordinate (vk-)(30:2cm) coordinate (vk)
           (10:2cm) coordinate (vk+)
           (240:2cm) coordinate (w)
           (280:2cm) coordinate (vkl);
     \draw (0) circle (2cm);
     \path (v0)edge (vk); \path (vkl)edge(vk-);
     \path (vk+)edge(v0);\path (vk)edge(vkl);
    \path (v0)edge (vk);\path (vk-)edge (vkl);
       \fill (v0) node[left]{\footnotesize$v_0$}
             (vn) node[left]{\footnotesize$v_{n-1}$}
             (vk-) node[right]{\footnotesize$v_{k-1}$}
             (w) node[left]{\footnotesize$w$}
             (vk) node[right]{\footnotesize$v_{k}$}(vkl) node[below]{\footnotesize$v_{k+\ell}$}
             (vk+)  node[right]{\footnotesize$v_{k+1}$};
        \fill (v0) circle (3pt)  (w) circle (3pt)
              (vn) circle (3pt)  (vk) circle (3pt)
              (vk-) circle (3pt) (vkl) circle (3pt)
              (vk+) circle (3pt);
    \draw[postaction = {mydeco=-0.4 ,decorate},
             postaction = {mydeco=0 ,decorate}, postaction = {mydeco=0.2 ,decorate}] (0:0) circle (2cm);
\end{tikzpicture}
\end{subfigure} \qquad    \quad
\begin{subfigure}{.25\textwidth}\centering
 \begin{tikzpicture} [scale=0.7, >=triangle 45,mydeco/.style = {decoration = {markings,mark = at position #1 with {\arrow{<}}}} ]
     \path (0:0) coordinate (0)  (160:2cm) coordinate (vn) (300:2cm) coordinate(l)
           (145:2cm) coordinate (v0)  (50:2cm) coordinate (vk-)(30:2cm) coordinate (vk)
           (10:2cm) coordinate (vk+)
           (280:2cm) coordinate (vkl);
     \draw (0) circle (2cm);
     \draw[->] (vk+) to (v0); \draw[->] (vk) to (vkl);
       \fill (v0) node[left]{\footnotesize$v_0$}
             (vn) node[left]{\footnotesize$v_{n-1}$}
             (l) node[right]{\footnotesize$\text{when } \ell\ge 2$}
             (vk-) node[right]{\footnotesize$v_{k-1}$}
             (vk) node[right]{\footnotesize$v_{k}$}(vkl) node[below]{\footnotesize$v_{k+\ell}$}
             (vk+)  node[right]{\footnotesize$v_{k+1}$};
        \fill (v0) circle (3pt)
              (vn) circle (3pt)  (vk) circle (3pt)
              (vk-) circle (3pt) (vkl) circle (3pt)
              (vk+) circle (3pt);
    \draw[postaction = {mydeco=-0.4 ,decorate},
             postaction = {mydeco=0 ,decorate}, postaction = {mydeco=0.2 ,decorate}] (0:0) circle (2cm);
\end{tikzpicture}
\end{subfigure}
\caption{Figures for Claim 1}\label{fig:claim1}\end{figure}
(1.1) $N_G[v_k]=V(v_0Cv_{k+\ell})$.

\medskip

\noindent Suppose not, then in view of $|N_G(v_k)|\ge k+\ell$, $v_k$ has a neighbor $w\in V(v_{k+\ell+1}Cv_{n-1})$ in $G$. Then $k+\ell+1 \le n-1$ or $k+2<n$. Thus the edges $v_0v_{k+1}$ and $v_{k}w$ are in $G$ but not in $E(C)$. Since $|v_0Cv_k|\ge k$ and $|v_{k+1}Cw|\ge \ell$,
Lemma~\ref{lem:crossing} then forces a 2-block cycle $c(k,\ell)$ in $D$. This contradiction completes the proof of (1.1).

\medskip

(1.2) If $\ell\ge 2$, then $(v_{k+1},v_0),(v_k,v_{k+\ell})\in A(D)$.

\medskip

\noindent Suppose that $\ell\ge 2$.
Then $v_kv_{k+\ell} \in E(G)\setminus E(C)$.
Moreover, $n>k+2$ and so $v_0v_{k+1}$ is not an edge of $E(C)$.
By considering the pair of edges $v_0v_{k+1}$ and $v_kv_{k+\ell}$ of $G$ not in $E(C)$,
since $|v_0Cv_k|\ge k$ and $|v_{k+1Cv_{k+\ell}}|\ge \ell-1$,  Lemma~\ref{lem:crossing} shows that, to avoid $c(k,\ell)$, the orientation of these edges in $D$ are $(v_{k+1},v_0),(v_k,v_{k+\ell})\in A(D)$.

\medskip

(1.3) $k\ge 2$ and thus, $v_{k-1}$ is a vertex distinct from $v_0$.

\medskip

\noindent Otherwise, $k=1$ and so $\ell\ge 2$, implying that $(v_k,v_{k+\ell})\in A(D)$ by (1.2);
then the arc $(v_k,v_{k+\ell})$ and the path $v_kCv_{k+\ell}$ together form a 2-block cycle $c(k,\ell)$  (where $k=1$), a contradiction.

\medskip

(1.4) $N_G[v_{k-1}]=V(v_0 C v_{k+\ell})$.

\medskip

\noindent Suppose not, then in view of $|N_G(v_{k-1})|\ge k+\ell$,
$v_{k-1}$ has a neighbor $w\in V(v_{k+\ell+1}Cv_{n-1})$ in $G$.
Then $k+\ell+1\le n-1$ or $k+2<n$, and so $v_0v_{k+1}\in E(G)\setminus E(C)$.
By (1.3), as $k\ge 2$, $v_{k-1}\neq v_0$, and so $v_{k-1}w\in E(G)\setminus E(C)$.
By considering two edges $v_0v_{k+1}, v_{k-1}w\in E(G)\setminus E(C)$,
since $|v_0Cv_{k-1}|=k-1$ and $|v_{k+1}Cw|\ge \ell$,
by Lemma~\ref{lem:crossing}, we get that $(v_0,v_{k+1}), (w,v_{k-1})\in A(D)$.
We can not have $\ell\ge 2$, as otherwise it would contradict (1.2).
Thus, $\ell=1$, then the path $v_0Cv_{k+1}$ and the arc $(v_0,v_{k+1})$ form a 2-block cycle $c(k,\ell)$ (where $\ell=1$), a contradiction.
This proves (1.4).

\medskip

As $k\ge 2$, we now observe that $v_0v_k$ and $v_{k-1}v_{k+\ell}$ are edges of $E(G)\setminus E(C)$.
Since $|v_0Cv_{k-1}|=k-1$ and $|v_kCv_{k+\ell}|=\ell$,
applying Lemma~\ref{lem:crossing}, we obtain that $(v_0,v_{k}), (v_{k+\ell},v_{k-1})\in A(D)$.
If $(v_{k+1},v_0)\in A(D)$, then the paths $(v_{k+1},v_0)+v_0Cv_{k-1}$ and $v_{k+1}Cv_{k+\ell}+(v_{k+\ell},v_{k-1})$
form a 2-block cycle $c(k,\ell)$ of $D$, a contradiction.
Hence, we must have $(v_0,v_{k+1})\in A(D)$ and by (1.2), we have $\ell=1$.
Then $(v_0,v_{k+1})$ and $v_0Cv_{k+1}$ form a 2-block cycle $c(k,\ell)$ (where $\ell=1$).
This completes the proof of Claim~1.
\end{proof}


As $k+\ell\ge 3$, from now on we may assume that $\ell\ge 2$.
We choose two vertices $u$ and $v$ such that $uv \in E(G)\setminus E(C)$ and the length of $uCv$ is as small as possible but at least $k+1$.
Note that such an edge $uv$ exists as $\delta(G)\ge k+\ell\ge k+2$.
We may assume that $u=v_0$ and $v=v_{r}$ where $n-1>r\ge k+1$.
By Claim 1, it follows that $r\ge k+2\ge 3$ (See Figure~\ref{fig:claim2} (i)).
We also note that by the minimality of $r$, $v_r$ has at most $k+1$ neighbors in $V(v_0 C v_{r})$ and so it has at least $\ell-1$ neighbors in $V(v_{r+1} C v_{n-1})$ and so $\ell-1\le n-r-1$ and so $r+\ell-1 \le n-1$. Thus, $v_0$, $v_1$, \ldots, $v_{r+\ell-2}$, $v_{r+\ell-1}$ are all distinct vertices in $G$.

\bigskip

{\bf Claim 2:} $N_G[v_{r-1}]= V(v_{r-k-1} C v_{r+\ell-1})$ and $N_G[v_{r-2}]=V(v_{r-k-2} C v_{r+\ell-2})$.

\begin{proof}[Proof of Claim 2.]
By the minimality of $r$, we observe that $v_{r-1}$ has no neighbors in $V(v_0Cv_{r-k-2})$.
If $v_{r-1}$ has a neighbor $w$ in $V(v_{r+\ell}C v_{n-1})$, then
by Lemma~\ref{lem:crossing}, the edges $v_0v_r, v_{r-1}w\in E(G)\setminus E(C)$
force a 2-block cycle $c(k,\ell)$ in $D$, a contradiction.
Therefore, $N_G(v_{r-1})\subseteq V(v_{r-k-1} C v_{r+\ell-1})$.
As $v_{r-1}$ has at least $k+\ell$ neighbors in $G$, it follows that $N_G[v_{r-1}]= V(v_{r-k-1} C v_{r+\ell-1})$.

Consider $v_{r-2}$.
If $v_{r-2}$ has a neighbor $w$ in $V(v_{r+\ell}Cv_{n-1})$,
then by Lemma~\ref{lem:crossing}, the edges $v_0v_r, v_{r-2}w\in E(G)\setminus E(C)$ force a 2-block cycle $c(k,\ell)$, a contradiction.
Moreover, by Claim~1, $v_{r+\ell-1}$ is not a neighbor of $v_{r-2}$.
Therefore, we conclude that $N_G(v_{r-2})$ is a subset of $V(v_0 C v_{r+\ell-2})$.
If $r=k+2$, then $v_0=v_{r-k-2}$ and so
$N_G(v_{r-2})\subseteq V(v_{r-k-2} C v_{r+\ell-2})$.
When $r\ge k+3$, by the minimality of $r$, one can observe that  $v_{r-2}$ has no neighbors in $V(v_0Cv_{r-k-3})$, which implies that $N_G(v_{r-2})\subseteq V(v_{r-k-2} C v_{r+\ell-2})$.
As $|N_G(v_{r-2})|\ge k+\ell$, it follows that $N_G[v_{r-2}]=V(v_{r-k-2} C v_{r+\ell-2})$.
This proves Claim~2.
\end{proof}


\begin{figure}\centering
\begin{subfigure}{.25\textwidth}\centering
 \begin{tikzpicture} [scale=0.7, >=triangle 45,mydeco/.style = {decoration = {markings,mark = at position #1 with {\arrow{<}}}} ]
     \path (0:0) coordinate (0)  (240:2cm) coordinate (w)
           (110:2cm) coordinate (vr-k) (130:2cm) coordinate (vr-k-)
           (170:2cm) coordinate (v0)  (70:2cm) coordinate (vr--)
           (50:2cm) coordinate (vr-)(30:2cm) coordinate (vr)
           (290:2cm) coordinate (vr+l)  (310:2cm) coordinate (vr+l-);
     \draw (0) circle (2cm);
     \draw[-] (v0) to (vr);
       \fill (v0) node[left]{\footnotesize$v_0$}
             (w) node[left]{\footnotesize$w$}
             (vr--) node[above]{\footnotesize$v_{r-2}$}
             (vr-) node[right]{\footnotesize$v_{r-1}$}
             (vr) node[right]{\footnotesize$v_{r}$ \tiny $(r\ge k+2)$}
             (vr-k)node[above]{\footnotesize$v_{r-k-1}$}
             (vr-k-)node[left]{\footnotesize$v_{r-k-2}$}
             (vr+l) node[below]{\footnotesize$v_{r+\ell-1}$}
             (vr+l-) node[right]{\footnotesize$v_{r+\ell-2}$} ;
        \fill (v0) circle (3pt)  (vr-k) circle (3pt) (w) circle (3pt)
              (vr-k-) circle (3pt)  (vr--) circle (3pt)
              (vr-) circle (3pt) (vr) circle (3pt)
              (vr+l) circle (3pt)  (vr+l-) circle (3pt);
    \draw[postaction = {mydeco=-0.4 ,decorate},
             postaction = {mydeco=0 ,decorate}, postaction = {mydeco=0.2 ,decorate}] (0:0) circle (2cm);
        \fill (-90:3cm) node[below]{(i)};
\end{tikzpicture}
\end{subfigure}  \qquad    \quad
\begin{subfigure}{.25\textwidth}\centering
 \begin{tikzpicture} [scale=0.7, >=triangle 45,mydeco/.style = {decoration = {markings,mark = at position #1 with {\arrow{<}}}} ]
     \path (0:0) coordinate (0)
           (110:2cm) coordinate (vr-k) (130:2cm) coordinate (vr-k-)
           (170:2cm) coordinate (v0)  (70:2cm) coordinate (vr--)
           (50:2cm) coordinate (vr-)(30:2cm) coordinate (vr)
           (290:2cm) coordinate (vr+l)  (310:2cm) coordinate (vr+l-);
     \draw (0) circle (2cm);
     \draw[->] (vr-k-) to (vr--); \draw[->] (vr--) to (vr+l-);
     \draw[->] (vr-k) to (vr-); \draw[->] (vr-) to (vr+l);
       \fill (v0) node[left]{\footnotesize$v_0$}
             (vr--) node[above]{\footnotesize$x$}
             (vr-) node[right]{\footnotesize$v$}
             (vr) node[right]{\footnotesize$v_{r}$ \tiny $(r\ge k+2)$}
             (vr-k)node[above]{\footnotesize$u_1$}
             (vr-k-)node[left]{\footnotesize$y_1$}
             (vr+l) node[below]{\footnotesize$u_2$}
             (vr+l-) node[right]{\footnotesize$y_2$} ;
        \fill (v0) circle (3pt)  (vr-k) circle (3pt)
              (vr-k-) circle (3pt)  (vr--) circle (3pt)
              (vr-) circle (3pt) (vr) circle (3pt)
              (vr+l) circle (3pt)  (vr+l-) circle (3pt);
    \draw[postaction = {mydeco=-0.4 ,decorate},
             postaction = {mydeco=0 ,decorate}, postaction = {mydeco=0.2 ,decorate}] (0:0) circle (2cm);
         \fill (-90:3cm) node[below]{(ii)};
\end{tikzpicture}
\end{subfigure}
\caption{Figures for Lemma~\ref{lem:Ham-delta}}\label{fig:claim2}
\end{figure}

We are ready to arrive at the final contradiction.
For simplicity, let (see Figure~\ref{fig:claim2} (ii))
\[y_1=v_{r-k-2},~~ u_1=v_{r-k-1}, ~~ x=v_{r-2}, ~~v=v_{r-1}, ~~y_2=v_{r+\ell-2},~~ u_2=v_{r+\ell-1}.\]
By Claim~2, $vu_2, xy_2\in E(G)$. Note that $vu_2$ and $xy_2$ are edges not in $E(C)$, since $n\ge \ell+2$.
Since $|vCy_2|=\ell-1$ and $|u_2Cx|=n-\ell-1\ge k$, by Lemma~\ref{lem:crossing},
we have $(v,u_2), (x,y_2)\in A(D)$.
If $k=1$, then the arc $(v,u_2)$ and the path $vCu_2$ form a 2-block cycle $c(k,\ell)$ in $D$.
Thus $k\ge 2$.

By Claim~2 again, $u_1v,xy_1\in E(G)$. Since $k\ge 2$ and so $n\ge k+2$, $u_1v$ and $xy_1$ are edges not in $E(C)$.
As $|u_1Cx|=k-1$ and $|vCy_1|=n-k-1\ge \ell$, by Lemma~\ref{lem:crossing}, we have $(u_1,v), (y_1,x)\in A(D)$. Then the two paths $u_1Cx + (x,y_2)$ and $(u_1,v)+vCy_2$ have the length $k$ and $\ell$, respectively,
and so they induce a 2-block cycle $c(k,\ell)$ in $D$. We reach a contradiction and this completes the proof of Lemma~\ref{lem:Ham-delta}.
\end{proof}


Now we can derive Theorem~\ref{thm:Hamiltonian} from Lemma~\ref{lem:Ham-delta}.

\begin{proof}[Proof of Theorem~\ref{thm:Hamiltonian}.]
It suffices to show that any Hamiltonian digraph $D$ with no $c(k,\ell)$ is $(k+\ell-1)$-degenerate.
Suppose the above statement is false, and let $D$ be a counterexample to it with minimum number of vertices.
That is, $D$ is an $n$-vertex Hamiltonian digraph with no $c(k,\ell)$ and is not $(k+\ell-1)$-degenerate,
but any Hamiltonian digraph with no $c(k,\ell)$ and with less than $n$ vertices is $(k+\ell-1)$-degenerate.
Let $G$ be the underlying graph of $D$ and $C=v_0, v_1, \ldots , v_{n-1}, v_0$ a Hamiltonian cycle of $D$.

If $\delta(G)\ge k+\ell$, then by Lemma~\ref{lem:Ham-delta}, $D$ contains a 2-block cycle $c(k,\ell)$, a contradiction.
Thus there exists some vertex in $G$ which has neighbors less than $k+\ell$. We may assume $|N_G(v_0)|\le k+\ell-1$.
Let $D'$ be the digraph obtained from $D$ by deleting $v_0$ and adding an arc $(v_{n-1},v_1)$,
and $G'$ be the underlying graph of $D'$. Clearly $D'$ is Hamiltonian, $G'$ is an underlying graph of $D'$, and $G-v_0$ is a subgraph of $ G'$.
Suppose $D'$ contains a 2-block cycle $c(k,\ell)$, say $H'$. If the arc $(v_{n-1},v_1)$ is not in $H'$, then $H'$ is a subgraph of $D$,
which yields a contradiction. Thus $H'$ does use $(v_{n-1},v_1)$.
However, the digraph obtained from $H'$ by replacing the arc $(v_{n-1},v_1)$ with the path $v_{n-1},v_0, v_1$ of length two is a 2-block cycle $c(k,\ell)$ as well,
which is contained in $D$, a contradiction. Hence, the Hamiltonian digraph $D'$ contains no 2-block cycle $c(k,\ell)$.
Then by our hypothesis, $D'$ is $(k+\ell-1)$-degenerate, so is $G'$.
Since $G-v_0$ is a subgraph of $G'$ and $|N_G(v_0)|\le k+\ell-1$, it follows that $G$ (and thus $D$) is also $(k+\ell-1)$-degenerate.
This completes the proof of Theorem~\ref{thm:Hamiltonian}
\end{proof}


\section{Proof of Theorem~\ref{thm:main:general}}
In this section, we will prove  Theorem~\ref{thm:main:general} for general strong digraphs.
The plan is first to reduce the upper bound of $\chi(D)$ to $\chi(F)$ for some special subdigraphs $F$ of a strong digraph $D$ (this part will be done in subsection~\ref{subsec:reduction}); and then we study some structural properties on $F$ in subsection~\ref{ssec:cycle-tree}; and finally in subsection~\ref{ssec:cycle-tree:coloring}, we obtain the upper bound for $\chi(F)$ and complete the proof.

\subsection{Some notations}\label{subsec:notation}
In this subsection, we introduce some notations and terminologies which will play important roles in the coming proofs.
Let $D$ be a digraph and $S\subset V(D)$. Let $D/S$ denote the {\it contraction} of $S$ in $D$,
i.e., a digraph obtained by contracting $S$ into a new vertex $v_S$ and adding arcs $(x,y)$ of the following two kinds:
\begin{itemize}
\item[(a)] $x=v_S$ and $y \in V(D)\setminus S$, if there exists $(w,y)\in A(D)$ for some $w\in S$, and
\item[(b)] $x\in V(D)\setminus S$ and $y =v_S$, if there exists $(x,w)\in A(D)$ for some $w\in S$.
\end{itemize}
For a vertex $v\in V(D/S)$, the \textit{preimage} $\varphi(v)$ of $v$ is defined by
\[\varphi(v) =\begin{cases} S & \text{if }v=v_S\\
\{v\} &\text{otherwise.}\end{cases}\]
And for $B\subset V(D/S)$, the \textit{preimage} $\varphi(B)$ of $B$ is defined to be $\varphi(B)=\bigcup_{v\in B} \varphi(v).$

We say a strong digraph $\mathcal{T}$ is a \textit{cycle-tree} if there is an ordering $C_1,C_2,\ldots,C_m$ of all cycles in $\mathcal{T}$ such that for $2\le i \le m$,
\begin{eqnarray*} &&\left| \ V(C_i) \cap \left(\cup_{j\in[i-1]} V(C_j)\right)\ \right| =1.\end{eqnarray*}
We also say this ordering $C_1,C_2,\ldots,C_n$ is a \textit{cycle-tree ordering} of $\mathcal{T}$.
See Figure~\ref{fig:cycle-tree} for an illustration of a cycle-tree.
A cycle-tree $\mathcal{T}$ is called a \textit{cycle-path}, if there exists an ordering $C_0$, $C_1$, $\ldots$, $C_m$ of all cycles of $\mathcal{T}$
such that $|V(C_{i})\cap V(C_{j})|\le 1$ for any distinct $i,j\in\{0,1,\ldots, m\}$,
and the equality holds if and only if $|i-j|=1$.
Here, the cycles $C_0$ and $C_m$ are called the {\it end-cycles} of $\mathcal{T}$, and the \textit{length} of such a cycle-path is defined to be $m$.

\begin{figure}[h!]\centering
\centering
 \begin{tikzpicture} [scale=0.7, >=triangle 45,mydeco/.style = {decoration = {markings,mark = at position #1 with {\arrow{<}}}} ]
     \path (0,0.65)  coordinate (0)
           (9.3,0.9) coordinate (5)
           (3.5,1) coordinate (1)
(5.2,0) coordinate (2)
(7.1,-0.9) coordinate (4)
(7.2,1.4) coordinate (3);
\fill (0) node[right]{\footnotesize$C_0$}
(1) node[above]{\footnotesize$C_1$}
(2) node[right]{\footnotesize$C_2$}
(3) node[above]{\footnotesize$C_3$}
(4) node[above]{\footnotesize$C_4$}
(5) node[right]{\footnotesize$C_5$};
\draw[rotate=45, postaction = {mydeco=-0.4 ,decorate}, postaction = {mydeco=0.2 ,decorate}]  (1) ellipse (2 and 1);
\draw[rotate=10, postaction = {mydeco=-0.4 ,decorate}, postaction = {mydeco=0.2 ,decorate}]  (2) ellipse (1.5 and 0.5);
\draw[rotate=-20, postaction = {mydeco=-0.4 ,decorate}, postaction = {mydeco=0.2 ,decorate}]   (3) ellipse (0.6 and 1.2);
\draw[rotate=15, postaction = {mydeco=-0.4 ,decorate}, postaction = {mydeco=0.15 ,decorate}]   (4) ellipse (0.4 and 1.2);
\draw[postaction = {mydeco=-0.4 ,decorate}, postaction = {mydeco=0.2 ,decorate}]  (0) ellipse (2 and 1);
\draw[postaction = {mydeco=-0.4 ,decorate}, postaction = {mydeco=0.2 ,decorate}]    (5) ellipse (1.6 and 1);
\fill (0)+(95:2 and 1) circle (3pt)
(0)+(125:2 and 1) circle (3pt)
(0)+(155:2 and 1) circle (3pt)
(0)+(200:2 and 1) circle (3pt)
(0)+(240:2 and 1) circle (3pt)
(0)+(270:2 and 1) circle (3pt)
(0)+(300:2 and 1) circle (3pt)
(0)+(325:2 and 1) circle (3pt)
(0)+(350:2 and 1) circle (3pt)
(0)+(50:2 and 1) circle (3pt)
(0)+(25:2 and 1) circle (3pt);
\fill (5)+(95:1.6 and 1) circle (3pt)
(5)+(125:1.6 and 1) circle (3pt)
(5)+(160:1.6 and 1) circle (3pt)
(5)+(200:1.6 and 1) circle (3pt)
(5)+(240:1.6 and 1) circle (3pt)
(5)+(270:1.6 and 1) circle (3pt)
(5)+(300:1.6 and 1) circle (3pt)
(5)+(340:1.6 and 1) circle (3pt)
(5)+(20:1.6 and 1) circle (3pt)
(5)+(55:1.6 and 1) circle (3pt);

\fill[rotate=-20]
(3)+(100:0.6 and 1.2) circle (3pt)
(3)+(150:0.6 and 1.2) circle (3pt)
(3)+(180:0.6 and 1.2) circle (3pt)
(3)+(230:0.6 and 1.2) circle (3pt)
(3)+(265:0.6 and 1.2) circle (3pt)
(3)+(320:0.6 and 1.2) circle (3pt)
(3)+(40:0.6 and 1.2) circle (3pt);
\fill[rotate=10]
(2)+(110:1.5 and 0.5) circle (3pt)
(2)+(265:1.5 and 0.5) circle (3pt)
(2)+(300:1.5 and 0.5) circle (3pt)
(2)+(40:1.5 and 0.5) circle (3pt);
\fill[rotate=45] (1)+(90:2 and 1) circle (3pt)
(1)+(165:2 and 1) circle (3pt)
(1)+(205:2 and 1) circle (3pt)
(1)+(260:2 and 1) circle (3pt)
(1)+(290:2 and 1) circle (3pt)
(1)+(330:2 and 1) circle (3pt)
(1)+(110:2 and 1) circle (3pt)
(1)+(50:2 and 1) circle (3pt)
(1)+(10:2 and 1) circle (3pt);
\fill[rotate=15]
(4)+(150:0.4 and 1.2) circle (3pt)
(4)+(180:0.4 and 1.2) circle (3pt)
(4)+(230:0.4 and 1.2) circle (3pt)
(4)+(300:0.4 and 1.2) circle (3pt)
(4)+(360:0.4 and 1.2) circle (3pt);
\end{tikzpicture}
\caption{A cycle-tree with a cycle-tree ordering $C_0, C_1,..., C_5$.}\label{fig:cycle-tree}
\end{figure}

Cycle-trees have some `tree-like' properties.
For two distinct cycles $C, C'$ of a cycle-tree $\mathcal{T}$, there exists a uniquely determined cycle-path in $\mathcal{T}$ with end-cycles $C$ and $C'$,
which we denote by $\Lambda_{\mathcal{T}}(C,C')$.
Moreover, for any two vertices $u,v\in V(\mathcal{T})$, there also exist a unique path from $u$ to $v$
and a unique path from $v$ to $u$ in $\mathcal{T}$, which we
denote by $u\mathcal{T}v$ and $v\mathcal{T}u$, respectively.

\subsection{Reducing $\chi(D)$ to $\chi(F)$}\label{subsec:reduction}

Let $k$ and $\ell$ be integers such that $k\ge \ell\ge 1$ and $k\ge 2$, and
let $D$ be a strong digraph with no 2-block cycle $c(k,\ell)$.
In this subsection, we will partition $V(D)$ so that each part induces a certain subdigraph $F$ and
show that $\chi(D)$ can be bounded from above by the maximum $\chi(F)$ (see Lemma~\ref{lem:chi(D)}).

We first define a sequence of strong digraphs $D^{(0)}$, $D^{(1)}$, $\ldots$, $D^{(m)}$ and
a sequence of cycles $C^{(0)}$, $C^{(1)}$, $\ldots$, $C^{(m-1)}$ as following.
Initially, let $D^{(0)}=D$. Now suppose that $D^{(i)}$ has been defined.
If $\chi(D^{(i)})\ge 2k-2$, then in view of Theorem~\ref{thm:bondy},
$D^{(i)}$ has at least one cycle of length at least $2k-2$.
Let $C^{(i)}$ be a longest cycle in $D^{(i)}$ and let $D^{(i+1)}=D^{(i)}/V(C^{(i)})$, which is the digraph obtained from $D^{(i)}$ by contracting $V(C^{(i)})$. Otherwise, $\chi(D^{(i)})\le 2k-3$ and we then stop. This procedure is well-defined since for a strong digraph, a contraction of a set of vertices which induces a strong subdigraph is also strong.

We emphasize that the above definition will be fundamental and we will constantly refer to it in the coming proofs.
Let us collect some properties on $D^{(j)}$ and $C^{(j)}$.
It is clear that $\chi(D^{(m)})\le 2k-3$, and $C^{(i)}$ is a subgraph of $D^{(i)}$ for each $i\in \{0,1,..., m-1\}$.
Denote the collection of the lengths of $C^{(i)}$'s by \[{L}:=\{|C^{(0)}|, |C^{(1)}|,\ldots, |C^{(m-1)}|\}.\]
The following is also easily obtained from the fact that each $C^{(j)}$ is chosen to be a longest one in $D^{(j)}$. We omit the proof.
\begin{proposition}\label{prop:C^i}
$|C^{(0)}|\ge |C^{(1)}|\ge \ldots \ge |C^{(m-1)}|\ge 2k-2$.
\end{proposition}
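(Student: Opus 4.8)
The plan is to prove the two halves of the chain separately: the uniform lower bound $|C^{(i)}|\ge 2k-2$ for every $i\in\{0,1,\ldots,m-1\}$, and the monotonicity $|C^{(i)}|\ge|C^{(i+1)}|$ for each $i<m-1$. For the lower bound, note that the construction only proceeds from $D^{(i)}$ when $\chi(D^{(i)})\ge 2k-2$, and that every $D^{(i)}$ is strong (a contraction of a strong subdigraph of a strong digraph is strong, and $V(C^{(i)})$ induces such a subdigraph since it contains the cycle $C^{(i)}$). Hence Bondy's theorem (Theorem~\ref{thm:bondy}) produces a cycle in $D^{(i)}$ of length at least $\chi(D^{(i)})\ge 2k-2$, and since $C^{(i)}$ is chosen to be a longest cycle of $D^{(i)}$ we get $|C^{(i)}|\ge 2k-2$; in particular $|C^{(m-1)}|\ge 2k-2$.

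For the monotonicity it suffices to show that every cycle $C$ of $D^{(i+1)}=D^{(i)}/V(C^{(i)})$ satisfies $|C|\le|C^{(i)}|$, and then apply this to a longest cycle of $D^{(i+1)}$. If $C$ avoids the contracted vertex $v_{C^{(i)}}$, then all its arcs join vertices outside $V(C^{(i)})$ and hence are already present in $D^{(i)}$ (contraction introduces no new arcs among such vertices), so $C$ is a cycle of $D^{(i)}$ and $|C|\le|C^{(i)}|$. If $C$ passes through $v_{C^{(i)}}$, write $C=v_{C^{(i)}},u_1,\ldots,u_t,v_{C^{(i)}}$ with all $u_j\notin V(C^{(i)})$; by conditions (a) and (b) in the definition of $D/S$, the two arcs of $C$ incident to $v_{C^{(i)}}$ arise from arcs $(w_1,u_1)$ and $(u_t,w_2)$ of $D^{(i)}$ with $w_1,w_2\in V(C^{(i)})$. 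Replacing $v_{C^{(i)}}$ by the sub-path of $C^{(i)}$ from $w_2$ to $w_1$ turns $C$ into a closed walk in $D^{(i)}$, which is in fact a cycle because $u_1,\ldots,u_t$ are pairwise distinct (vertices of $C$) and disjoint from $V(C^{(i)})$; its length is at least $|C|$, equal to $|C|$ when $w_1=w_2$ and at least $|C|+1$ otherwise (the sub-path then contributing at least one extra arc). Since $C^{(i)}$ is a longest cycle of $D^{(i)}$, this cycle has length at most $|C^{(i)}|$, so $|C|\le|C^{(i)}|$, as wanted.

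The argument is essentially bookkeeping, and the lower bound is immediate from Bondy's theorem; the only point requiring any care is the expansion step in the second case, namely verifying that the substituting closed walk really is a cycle (this uses that the $u_j$ are distinct from each other and lie outside $V(C^{(i)})$) and counting arcs correctly so as to compare its length with both $|C|$ and $|C^{(i)}|$. I expect this minor verification to be the main obstacle, if any.
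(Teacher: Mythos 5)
Your proof is correct, and it fills in precisely the argument the paper omits (the authors state the proposition follows from each $C^{(j)}$ being a longest cycle of $D^{(j)}$ and skip the details): the lower bound via Bondy's theorem and the stopping condition $\chi(D^{(i)})\ge 2k-2$, and the monotonicity by un-contracting any cycle of $D^{(i+1)}$ through $v_{C^{(i)}}$ into a cycle of $D^{(i)}$ of at least the same length. Your care in checking that the un-contracted closed walk is a genuine cycle is exactly the right point to verify, and nothing is missing.
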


For each $j\in \{0,1,...,m\}$, $D^{(j)}$ is also a strong digraph with no 2-block cycle $c(k,\ell)$.

\begin{proposition}\label{prop:D^i}
For each $j\in \{0,1,...,m\}$, $D^{(j)}$ contains no $c(k,\ell)$.
\end{proposition}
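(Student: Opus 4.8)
The plan is to prove Proposition~\ref{prop:D^i} by induction on $j$, since $D^{(j+1)}$ is obtained from $D^{(j)}$ by a single contraction of a cycle. The base case $j=0$ is immediate because $D^{(0)}=D$ has no $c(k,\ell)$ by hypothesis. For the inductive step, I would assume $D^{(j)}$ has no $c(k,\ell)$ and show that $D^{(j+1)}=D^{(j)}/V(C^{(j)})$ has none either, where $C^{(j)}$ is a longest cycle of $D^{(j)}$ (which exists and has length at least $2k-2$ by Theorem~\ref{thm:bondy}, since the construction only continued past step $j$ when $\chi(D^{(j)})\ge 2k-2$).

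Suppose toward a contradiction that $D^{(j+1)}$ contains a $c(k,\ell)$, say $H'$, consisting of two internally disjoint directed paths $P$ and $Q$ from a vertex $a$ to a vertex $b$, of lengths at least $k$ and at least $\ell$ respectively. Write $v_S$ for the contracted vertex of $D^{(j+1)}$ with preimage $\varphi(v_S)=V(C^{(j)})$. If $v_S\notin V(H')$, then $H'$ is already a subdigraph of $D^{(j)}$, contradicting the inductive hypothesis. So $v_S\in V(H')$. The idea is to "uncontract": replace the vertex $v_S$ by a suitable arc of $C^{(j)}$ to recover a $c(k,\ell)$ inside $D^{(j)}$. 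Concretely, $v_S$ lies on exactly one of $P,Q$ (or is an endpoint $a$ or $b$); in either case it is incident in $H'$ to at most two arcs of $H'$, say an incoming arc $(x,v_S)$ and an outgoing arc $(v_S,y)$ (with the obvious modifications if $v_S=a$ or $v_S=b$, where only one such arc is present). By the definition of contraction, $(x,v_S)\in A(D^{(j+1)})$ comes from an arc $(x,w)\in A(D^{(j)})$ with $w\in V(C^{(j)})$, and $(v_S,y)$ comes from an arc $(w',y)\in A(D^{(j)})$ with $w'\in V(C^{(j)})$. Since $C^{(j)}$ is a cycle, it contains a directed path $R$ from $w$ to $w'$ along $C^{(j)}$ (of length at least $0$, and at most $|C^{(j)}|-1$). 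Then replacing $v_S$ in $H'$ by the path $x,(R\text{ through }C^{(j)}),w',y$ yields a directed subdigraph of $D^{(j)}$ that is still an oriented cycle consisting of two internally disjoint directed paths from $a$ to $b$; the two block lengths only increase (one block absorbs the extra path $R$ of nonnegative length, the other is unchanged), so they are still at least $k$ and at least $\ell$. Hence $D^{(j)}$ contains a $c(k,\ell)$, contradicting the inductive hypothesis.

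There is one subtlety to address: after substituting the path through $C^{(j)}$, we must make sure the result is genuinely a $c(k,\ell)$, i.e.\ an oriented cycle that decomposes into two \emph{internally vertex-disjoint} directed paths, rather than some degenerate object. The internal disjointness could a priori fail if the inserted path $R$ through $C^{(j)}$ meets the other block of $H'$ — but that cannot happen, because the other block of $H'$ lives in $D^{(j+1)}-v_S$, whose vertex set is disjoint from $\varphi(v_S)=V(C^{(j)})$, and it is exactly the vertices of $C^{(j)}$ that $R$ uses. Similarly, when $v_S$ is an endpoint of $H'$, one checks that the block structure is preserved after choosing the appropriate subpath of $C^{(j)}$ between the relevant attachment vertices. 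I expect this verification that the uncontracted object is an honest 2-block cycle (and not, say, a cycle with one block, which could occur only when one of the original blocks had length below the threshold — but the inserted path has nonnegative length, so the threshold is maintained) to be the only point requiring care; once it is settled, the contradiction is immediate and the proposition follows. Also note the remark already in the text that contraction preserves strong connectivity, so each $D^{(j)}$ is strong, which is what makes Theorem~\ref{thm:bondy} applicable throughout the construction.
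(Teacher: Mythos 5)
Your proof is correct and takes essentially the same route as the paper's, which is just the induction on $j$ plus the observation that un-contracting $C^{(j)}$ (by routing through a subpath of the cycle between the attachment vertices) turns a $c(k,\ell)$ in $D^{(j+1)}$ into one in $D^{(j)}$ — the paper calls this step straightforward, and you have simply spelled it out. One small slip: when $v_S$ is an endpoint of $H'$ it is incident to \emph{two} arcs of $H'$ (both outgoing if $v_S=a$, both incoming if $v_S=b$), not one, but your later remark about choosing the appropriate subpath of $C^{(j)}$ between the two attachment vertices handles exactly this case.
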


\begin{proof}
We prove by induction on $j$. The base case $j=0$ follows as $D$ contains no $c(k,\ell)$.
Suppose that $D^{(j)}$ contains no $c(k,\ell)$.
If $D^{(j+1)}=D^{(j)}/C^{(j)}$ contains a 2-block cycle $c(k,\ell)$ (call this subdigraph $H$),
then it is straightforward to see that the subdigraph of $D^{(j)}$
obtained from $H$ by un-contracting $C^{(j)}$ also contains a 2-block cycle $c(k,\ell)$, a contradiction.
This proves the proposition.
\end{proof}

For each $v\in V(D^{(m)})$ and $j\in [m]$, we recursively define the $j^{th}$ {\it preimage} $\varphi^{(j)}(v)$ of $v$ as following:
let $\varphi^{(1)}(v)=\varphi(v)\subseteq V(D^{(m-1)})$ and for $2\le j\le m$,
  \[\varphi^{(j)}(v) = \varphi\left(\varphi^{(j-1)}(v)\right)\subseteq V(D^{(m-j)}).\]
In particular, $\varphi^{(m)}(v)\subseteq V(D)$.
So $\varphi^{(m)}(v)$ for all $v\in V(D^{(m)})$ form a partition of $V(D)$.

We are ready to prove the main lemma of this subsection.

\begin{lemma}\label{lem:chi(D)}
It holds that
\[\chi(D) \le (2k-3)\times  \max_{v\in V(D^{(m)})} \chi(D[\varphi^{(m)}(v)]).\]
\end{lemma}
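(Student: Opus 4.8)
The plan is to colour $D^{(m)}$ with $2k-3$ colours, which is possible since $\chi(D^{(m)})\le 2k-3$, and then lift this colouring back to $D$ one contraction at a time. Concretely, fix a proper colouring $c$ of the underlying graph of $D^{(m)}$ with colour classes $V_1,\ldots,V_{2k-3}$. For each $i\in[2k-3]$, let $U_i=\varphi^{(m)}(V_i)=\bigcup_{v\in V_i}\varphi^{(m)}(v)\subseteq V(D)$; these sets partition $V(D)$. I would show that for each $i$, the induced subdigraph $D[U_i]$ has chromatic number at most $\max_{v\in V(D^{(m)})}\chi(D[\varphi^{(m)}(v)])$, call this maximum $t$. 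Granting this, I colour each $D[U_i]$ properly with the palette $\{1,\ldots,t\}$, but using a disjoint block of $t$ colours for each of the $2k-3$ indices $i$; since an edge of $D$ between $U_i$ and $U_j$ with $i\ne j$ projects to an edge of $D^{(m)}$ between $V_i$ and $V_j$ (edges only contract to edges or vertices, never disappear), no two adjacent vertices lying in different blocks $U_i,U_j$ receive the same colour, and within a single $U_i$ properness is guaranteed by construction. This yields a proper colouring of $D$ with $(2k-3)t$ colours, which is exactly the claimed bound.

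The remaining point is the inequality $\chi(D[U_i])\le t$ for each fixed $i$. Here $U_i$ is a union of preimages $\varphi^{(m)}(v)$ over an \emph{independent} set $V_i$ of $D^{(m)}$. I would argue by induction on $m$ (equivalently, by peeling off the contractions one at a time). The key observation is that if $v,v'\in V_i$ are distinct and non-adjacent in $D^{(m)}$, then at every intermediate stage $D^{(j)}$ there is no arc between $\varphi^{(m-j)}(v)$ and $\varphi^{(m-j)}(v')$: any such arc would, upon re-contracting, produce an arc between $v$ and $v'$ in $D^{(m)}$, contradicting independence. Hence $D[U_i]$ is the disjoint union (no edges between parts) of the subdigraphs $D[\varphi^{(m)}(v)]$ for $v\in V_i$, so its chromatic number is $\max_{v\in V_i}\chi(D[\varphi^{(m)}(v)])\le t$. (One must be a little careful that the preimage operation $\varphi$ really does respect independence across \emph{all} of $D^{(m)}$ at once, not just between a fixed pair; but since the contracted sets $V(C^{(j)})$ at each stage are single vertices after contraction, and preimages of distinct vertices are disjoint, this is a routine bookkeeping check.)

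I expect the main obstacle to be purely notational rather than mathematical: keeping straight how arcs behave under iterated contraction, and verifying that ``edge goes to edge or to a single vertex, never vanishes'' holds for the contraction operation $D/S$ as defined in Subsection~\ref{subsec:notation}. Once that is pinned down, the colouring argument is the standard ``product of a $(2k-3)$-colouring with a $t$-colouring of each fibre'' trick, and the bound $\chi(D^{(m)})\le 2k-3$ is immediate from the construction (the process stopped precisely because $\chi(D^{(m)})\le 2k-3$). No appeal to the no-$c(k,\ell)$ hypothesis or to Bondy's theorem is needed for this lemma itself — those were already used to make the construction of the $D^{(j)}$'s well-defined.
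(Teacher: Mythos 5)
Your proposal is correct and follows essentially the same argument as the paper: partition $V(D^{(m)})$ into $2k-3$ independent sets, take the preimages of each to partition $V(D)$, observe that independence in $D^{(m)}$ forces the absence of arcs between distinct fibres within a class (so each class has chromatic number at most $t$), and combine with disjoint palettes. The paper states the "no arcs between preimages of non-adjacent vertices" step without the inductive unpacking you sketch, but the substance is identical.
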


\begin{proof}
Let $t:=\max \chi(D[\varphi^{(m)}(v)])$ over all $v\in V(D^{(m)})$.
We recall that $\chi(D^{(m)})\le 2k-3$. So $V(D^{(m)})$ can be partitioned into $2k-3$ independent sets,
say $B_1, B_2, \ldots, B_{2k-3}$. (Here it is possible that $B_i=\emptyset$ for some $i\in [2k-3]$.)
For each $i\in [2k-3]$, let $V_i$ be the union of $\varphi^{(m)}(v)$ over all $v\in B_i$.
So $V_1,...,V_{2k-3}$ form a partition of $V(D)$.
Since $B_i$ is independent in $D^{(m)}$, it is easy to see that for distinct vertices $u$ and $v$ of $B_i$, there are no arcs between $\varphi^{(m)}(u)$ and $\varphi^{(m)}(v)$ in $D$.
This shows that for each $i\in [2k-3]$, the chromatic number of each induced subdigraph $D[V_i]$ is at most $t$, implying that $\chi(D)\le (2k-3)\cdot t$.
This completes the proof of Lemma~\ref{lem:chi(D)}
\end{proof}

In the rest of this section, we denote by $F:=D[\varphi^{(m)}(s)]$ for an arbitrary vertex $s\in V(D^{(m)})$.

\subsection{Properties on $F$}\label{ssec:cycle-tree}
In the following two lemmas we obtain some useful properties on $F$. Recall that
$L$ is the set of lengths of cycles $C^{(i)}$'s.

\begin{lemma}\label{lem:cycle-tree}
If $F$ has more than one vertex, then $F$ contains a cycle-tree $\mathcal{T}$ as a spanning subdigraph such that
the length of every cycle of $\mathcal{T}$ is from ${L}$ (and thus at least $2k-2$).\end{lemma}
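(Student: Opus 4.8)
I want to show that if $F = D[\varphi^{(m)}(s)]$ has more than one vertex, then $F$ contains a spanning cycle-tree $\mathcal{T}$ all of whose cycles have length in $L$. The natural strategy is to \emph{reverse} the contraction process $D^{(0)} \to D^{(1)} \to \cdots \to D^{(m)}$ one step at a time, tracking the preimage of $s$ and maintaining a cycle-tree spanning it. Formally, I would prove by downward induction on $j$ (from $j=m$ to $j=0$) a statement of the following form: for every vertex $w \in V(D^{(j)})$ such that $\varphi^{(m-j)}(w)$ contains more than one vertex of $D$, the induced subdigraph $D[\varphi^{(m-j)}(w)]$ (equivalently, the subdigraph of $D^{(j)}$ obtained by un-contracting down to level $0$... but it is cleaner to phrase it on $D$ directly via $\varphi$) contains a spanning cycle-tree whose cycles all have lengths drawn from $L$. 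Taking $j=m$ and $w=s$ gives the lemma.

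\textbf{The induction step.} Suppose the claim holds at level $j+1$ and consider a vertex $w \in V(D^{(j)})$ with $|\varphi^{(m-j)}(w)| \ge 2$. Recall $D^{(j+1)} = D^{(j)}/V(C^{(j)})$, where $C^{(j)}$ is a longest cycle of $D^{(j)}$ and $|C^{(j)}| \in L$ with $|C^{(j)}| \ge 2k-2$. There are two cases. If $w$ is a vertex of $D^{(j)}$ not contracted at this step (i.e.\ $w \notin V(C^{(j)})$ and $w \ne v_{C^{(j)}}$), then $\varphi^{(m-j)}(w) = \varphi^{(m-j-1)}(w')$ for the corresponding vertex $w'$ of $D^{(j+1)}$, and we are done immediately by the inductive hypothesis at level $j+1$. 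The substantive case is when $w = v_{C^{(j)}}$, the contracted vertex: then $\varphi(w) = V(C^{(j)})$, and the set $\varphi^{(m-j)}(w)$ is the union, over all $u \in V(C^{(j)})$, of $\varphi^{(m-j-1)}(u)$ (viewing $u$ as a vertex of $D^{(j+1)}$). For each such $u$, if $|\varphi^{(m-j-1)}(u)| \ge 2$ then by the inductive hypothesis $D[\varphi^{(m-j-1)}(u)]$ has a spanning cycle-tree $\mathcal{T}_u$ with cycle-lengths in $L$; if $|\varphi^{(m-j-1)}(u)| = 1$ then $\mathcal{T}_u$ is a single vertex. Now I glue these along the cycle $C^{(j)}$: uncontract $C^{(j)}$ to obtain, inside $D$, a cycle $\widehat{C}$ of length $|C^{(j)}| \in L$ passing through exactly one chosen vertex of each $\varphi^{(m-j-1)}(u)$ (this is exactly the "un-contraction" used in Proposition~\ref{prop:D^i}: an arc of $C^{(j)}$ incident to $v_{C^{(j)}}$ lifts to an arc of $D$ incident to some vertex of the relevant fiber). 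The union $\widehat{C} \cup \bigcup_u \mathcal{T}_u$ is then a spanning subdigraph of $D[\varphi^{(m-j)}(w)]$; I must check it is a cycle-tree with the required cycle-lengths.

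\textbf{Checking the cycle-tree property.} This is the step I expect to require the most care. I would build a cycle-tree ordering explicitly: start with $\widehat{C}$ as $C_1$; then, walking around $\widehat{C}$, for each fiber $\varphi^{(m-j-1)}(u)$ with $|\mathcal{T}_u| \ge 2$, append the cycles of $\mathcal{T}_u$ in its own cycle-tree order. I need to verify two things. First, each newly appended cycle meets the union of all previously listed cycles in exactly one vertex: within a single $\mathcal{T}_u$ this is the defining property of its cycle-tree ordering, while the very first cycle of $\mathcal{T}_u$ meets $\widehat{C}$ precisely at the one contact vertex of the fiber $\varphi^{(m-j-1)}(u)$ on $\widehat{C}$ — here I use that distinct fibers are vertex-disjoint and that there are no arcs (hence no cycles) of $D$ running between two distinct fibers of the \emph{same} contracted vertex, which follows because any cycle or path between fibers of $u, u' \in V(C^{(j)})$ would project to a closed walk in $D^{(j+1)}$ and cannot stay inside one fiber — in fact the only adjacencies between fibers come from arcs of $\widehat{C}$ itself. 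Second, I must confirm $\widehat{C} \cup \bigcup \mathcal{T}_u$ is strongly connected and that \emph{every} cycle of this digraph is one of the listed cycles $C_1, C_2, \dots$; strong connectivity is clear since $\widehat{C}$ is a cycle through every fiber and each $\mathcal{T}_u$ is strong and attached at a single vertex, and the "every cycle is listed" part is the standard block-structure fact that in a digraph assembled by gluing strong pieces at cut-vertices, every cycle lies inside one piece. Finally, every cycle of $\widehat{C} \cup \bigcup \mathcal{T}_u$ either is $\widehat{C}$ (length $|C^{(j)}| \in L$) or lies in some $\mathcal{T}_u$ (length in $L$ by the inductive hypothesis), so all cycle-lengths are in $L$, which as observed in Proposition~\ref{prop:C^i} are all at least $2k-2$. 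This completes the induction and hence the proof of Lemma~\ref{lem:cycle-tree}.
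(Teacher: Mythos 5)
Your overall architecture (reverse the contraction process, give each fiber a spanning cycle-tree by induction, and glue them along a lift of the contracted cycle) matches the paper's strategy, but there is a genuine gap at the single most important step: you assert that $C^{(j)}$ "un-contracts" to a cycle $\widehat{C}$ in $D$ of length $|C^{(j)}|$ passing through \emph{exactly one} vertex of each fiber, citing the un-contraction used in Proposition~\ref{prop:D^i}. That proposition only shows that an arc incident to a contracted vertex lifts to an arc incident to \emph{some} vertex of the fiber; it does not show that the in-arc and the out-arc of $C^{(j)}$ at a vertex $u$ lift to arcs meeting the \emph{same} vertex of $u$'s fiber. If they meet different vertices $y$ and $x$ of the fiber, then $\widehat{C}$ is not a cycle at all — to close it up you must route through the fiber from $y$ to $x$, which destroys both the length claim ($|\widehat{C}|\in L$) and the cycle-tree structure (the closing-up path shares more than one vertex with cycles of $\mathcal{T}_u$). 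This coincidence is exactly where the hypotheses of the theorem enter: the paper proves it (as the claim "$x=y$" in its proof) by observing that if $x\neq y$, the detour $(y,y')+y'C'x'+(x',x)$ is a path of length at least $|C'|\ge 2k-2\ge k$, and then either $|yCx|\ge\ell$ gives a forbidden $c(k,\ell)$, or $|yCx|\le\ell-1$ gives a cycle longer than $C^{(m-j-1)}$, contradicting that the contracted cycle was a longest one in its digraph. Without this argument your construction simply fails, and note that the paper runs it one un-contraction at a time precisely because the longest-cycle property holds in the intermediate digraph $D^{(i)}$, not in $D$; your "all the way down to $D$ in one shot" formulation makes the needed claim harder to even state correctly.

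Two smaller issues. First, when you append the cycles of $\mathcal{T}_u$ after $\widehat{C}$ "in its own cycle-tree order," the first cycle of that order need not contain the contact vertex $a_u$ where $\widehat{C}$ meets the fiber, in which case it would intersect the previously listed cycles in zero vertices; you need to re-root the cycle-tree ordering of $\mathcal{T}_u$ at a cycle through $a_u$ (true, but it should be said). Second, your justification that "the only adjacencies between fibers come from arcs of $\widehat{C}$" conflates the induced digraph $F$ (which certainly can have chords between fibers — these become the external arcs treated later in the paper) with the constructed subdigraph $\widehat{C}\cup\bigcup_u\mathcal{T}_u$, for which the statement is true by construction and needs no projection argument. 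Neither of these is fatal, but the missing $x=y$ argument is.
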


\begin{proof}
Recall that for each $j\in [m]$, the $j^{th}$ preimage $\varphi^{(j)}(v)$ is a subset of $V(D^{(m-j)})$.
Let \[D_j:=D^{(m-j)}[\varphi^{(j)}(v)].\]
We prove by induction on $j\in [m]$ that every $D_j$ either consists of $\{v\}$, or contains a cycle-tree $\mathcal{T}_j$ as a spanning subdigraph such that the length of every cycle of $\mathcal{T}$ is from ${L}$.
This is clearly sufficient, as $F=D_m$.
The base case $j=1$ is trivial, as by definition, $\varphi(v)$ is either $\{v\}$
or the cycle $C^{(m-1)}$ which is a spanning cycle-tree of $D_1$.

Now suppose that the statement holds for some $j\in [m]$ and we consider $D_{j+1}$.
If $D_j$ consists of $\{v\}$ (i.e., $\varphi^{(j)}(v)=\{v\}$),
then $\varphi^{(j+1)}(v)=\{v\}$ or $C^{(m-j-1)}$, and similarly as the base case,
we see that the statement also holds for $D_{j+1}$.
Hence, we may assume that $D_j$ contains a cycle-tree $\mathcal{T}_j$ as a spanning subdigraph
such that the length of every cycle of $\mathcal{T}_j$ is from ${L}$.

We point out that $D_j$ and  $D_{j+1}$ are induced subgraphs of  $D^{(m-j)}$ and $D^{(m-j-1)}$, respectively,
and $D^{(m-j)}=D^{(m-j-1)}/C^{(m-j-1)}$, where the new vertex of $D^{(m-j)}$, say $u$,
is obtained by contracting the cycle $C:=C^{(m-j-1)}$.
If $\varphi^{(j+1)}(v)=\varphi^{(j)}(v)$, then clearly $D_{j+1}=D_j$ and we are done.
So we may assume that $\varphi^{(j+1)}(v)\neq \varphi^{(j)}(v)$.
Then it must be the case that $u\in \varphi^{(j)}(v)=V(D_j)$ and thus $V(D_{j+1})=(V(D_j)\setminus \{u\})\cup V(C)$.
In fact, we also have $D_j=D_{j+1}/C$.

Since $u$ is a vertex of $D_j$ and $D_j$ contains a spanning cycle-tree $\mathcal{T}_j$ with the described property,
there exists a non-empty set $\mathcal{C}$ of cycles in $\mathcal{T}_j$
containing the vertex $u$.
Take $C'$ to be any cycle in $\mathcal{C}$. Then there exist $x',y'\in V(C')\setminus \{u\}$ such that
$(x',u)$ and $(u,y')$ are the two arcs of $C'$ incident to $u$.
Un-contracting back to $D_{j+1}$, we see there are two arcs $(x',x)$ and $(y,y')$ of $D_{j+1}$ for some $x,y\in V(C)$.

We claim that $x=y$ for every such $C'\in \mathcal{C}$. Suppose for a contradiction that $x\neq y$.
Then there exists a path $P:=(y,y')+ y' C' x' + (x',x)$ in $D_{j+1}$ from $y$ to $x$ such that
\[|P|\ge 1+(|C'|-2)+1=|C'|\ge 2k-2 \ge k,\]
where the last inequality is from $k\ge 2$.
If $|yCx|\ge \ell$, then the paths $P$ and $yCx$ are internally disjoint paths in $D_{j+1}$ from $y$ to $x$ of length at least $k$ and $\ell$, respectively, and thus they form a 2-block cycle $c(k,\ell)$ in $D_{j+1}$ (and thus in $D^{(m-j-1)}$), a contradiction to Proposition~\ref{prop:D^i}.
So we have $|yCx|\le \ell-1$. Then $Q:=P+ xCy$ is a cycle in $D_{j+1}$ such that (note $k\ge \ell$ and $k\ge 2$)
\[|Q|=|P|+|C|-|yCx|\ge (2k-2)+|C|-(\ell-1)>|C|,\]
contradicting the fact that $C=C^{(m-j-1)}$ is a longest cycle in $D^{(m-j-1)}$.
This proves $x=y$.

For any $C'\in \mathcal{C}$, we update $C'$ to be a cycle $C''$ in $D_{j+1}$ by replacing $(x',u), (u,y')$ with $(x',x), (x,y')$.
Clearly, $|C''|=|C'|\in {L}$. We then can define a subdigraph $\mathcal{T}_{j+1}$
of $D_{j+1}$ to be obtained from $\mathcal{T}_j$ by replacing all cycles $C'\in {C}$
with the corresponding $C''$ and by adding the new cycle $C^{(m-j-1)}$.
It is easy to check that $\mathcal{T}_{j+1}$ indeed is a spanning cycle-tree of $D_{j+1}$,
and the length of every cycle of $\mathcal{T}_{j+1}$ is from ${L}$. This finishes the proof of Lemma~\ref{lem:cycle-tree}.
\end{proof}


\noindent{\bf Remark.} From the proof of Lemma~\ref{lem:cycle-tree},
we also see that during the contraction process (say from $D^{(m-j-1)}$ to $D^{(m-j)}$ by contracting the cycle $C=C^{(m-j-1)}$),
either the spanning cycle-tree $\mathcal{T}_{j+1}$ of $D_{j+1}$ remains unchanged,
or $C$ is a cycle in $\mathcal{T}_{j+1}$ and $\mathcal{T}_j=\mathcal{T}_{j+1}/C$.
Hence, if we look at the whole contraction process,
at each step the spanning cycle-tree $\mathcal{T}$ of $F$ will either remain the same or contract one of its cycles.

\medskip

From now on, we assume that $F$ has at least two vertices and let $\mathcal{T}$ be a fixed cycle-tree of $F$ guaranteed in Lemma~\ref{lem:cycle-tree}.
An arc $(x,y)$ in $F$ is called an {\it external arc} (with respect to $\mathcal{T}$),
if there is no cycle of $\mathcal{T}$ containing both $x$ and $y$.
The following lemmas tells that
for an external arc $(x,y)$ in $F$, the path $y\mathcal{T}x$ must be short.

\begin{lemma}\label{lem:short-backward-path}
Suppose that  $F$ has an external arc $(x,y)$.
Let $C_x$ and $C_y$ be the cycles of $\mathcal{T}$  containing $x$ and $y$, respectively, such that $|\Lambda_{\mathcal{T}}(C_x,C_y)|$ is the minimum.
Let $u$ and $v$ be the common vertices of the first and the last two cycles of the cycle-path $\Lambda_{\mathcal{T}}(C_x,C_y)$, respectively.
Then both $|v\mathcal{T}x|$ and $|y\mathcal{T}u|$ are at most $\ell-2$.
\end{lemma}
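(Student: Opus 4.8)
The plan is to argue by contradiction: assume, say, $|v\mathcal{T}x|\ge\ell-1$, and produce a $c(k,\ell)$ in $F$, hence in $D^{(m-j-1)}$ for some $j$, contradicting Proposition~\ref{prop:D^i}. Throughout I would work inside the fixed spanning cycle-tree $\mathcal{T}$ of $F$, using its tree-like structure: for distinct cycles $C,C'$ of $\mathcal{T}$ there is a unique cycle-path $\Lambda_{\mathcal{T}}(C,C')$, and for any two vertices there are unique directed paths between them in $\mathcal{T}$. The external arc $(x,y)$ is a ``shortcut'' across this structure, and the idea is to splice it together with directed paths along $\mathcal{T}$ to form a $2$-block cycle.

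First I would set up the geometry. Let $\Lambda:=\Lambda_{\mathcal{T}}(C_x,C_y)$ be the minimal cycle-path from $C_x$ to $C_y$, and let $u$ (resp.\ $v$) be the cut vertex shared by the first two (resp.\ last two) cycles of $\Lambda$; so $u\in V(C_x)$ and $v\in V(C_y)$, and by minimality of $|\Lambda|$ the paths $y\mathcal{T}u$ and $v\mathcal{T}x$ stay within $C_y$ and $C_x$ respectively (they do not pass through intermediate cycles). The key observation is that each cycle $C_i$ of $\Lambda$ has length at least $2k-2\ge k+\ell$ (since $k\ge\ell$), so along $C_i$ from one attaching cut vertex to the next we can find a directed path of length at least $k$ \emph{and}, going the other way around $C_i$, a directed path of length at least $\ell$. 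Concatenating the ``long side'' of each cycle in $\Lambda$ via the cut vertices yields a directed path $P$ from $x$ to $y$ (or from $u$-side to $v$-side) of length at least $k$; I would in fact only need one such cycle, namely $C_x$ or $C_y$ itself, if $|v\mathcal{T}x|$ or $|y\mathcal{T}u|$ is large, but the general splicing handles longer cycle-paths uniformly.

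Now the main step: suppose $|v\mathcal{T}x|\ge\ell-1$. Consider the directed path $R$ that goes from $y$ along $\mathcal{T}$: follow $y\mathcal{T}u$ (inside $C_y$), then the long side of each intermediate cycle of $\Lambda$ through the cut vertices, arriving at a vertex on $C_x$; combined with the arc $(x,y)$ this produces one directed path from $x$ back to (a vertex feeding into) $y$ of length $\ge k$ through the ``outside''. Against it I put the path $v\mathcal{T}x$ of length $\ge\ell-1$ together with the relevant short arcs on $C_y$ and $C_x$ to get a second internally-disjoint directed path of length $\ge\ell$ between the same two endpoints. Disjointness is guaranteed because $v\mathcal{T}x$ lives entirely in $C_x$ while the long path uses the complementary arcs of $C_x$, the intermediate cycles, and $C_y$, meeting only at the prescribed cut vertices. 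This gives $c(k,\ell)$ in $F$, the desired contradiction; the case $|y\mathcal{T}u|\ge\ell-1$ is symmetric (reverse the roles of $C_x,C_y$ and of $x,y$). One subtlety to handle separately is when $C_x=C_y$ (the cycle-path has length $0$): then $x$ and $y$ lie on a common cycle, but $(x,y)$ being \emph{external} means by definition this cannot happen, so $|\Lambda|\ge 1$ and $u\ne v$ are genuinely distinct cut vertices.

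I expect the main obstacle to be the bookkeeping of \textbf{internal disjointness and the exact lengths} when $\Lambda$ has length $\ge 2$: one must check that choosing the ``long side'' ($\ge k$) of one attaching cycle and ``short sides'' elsewhere, versus choosing a ``short side'' ($\ge\ell-1$) and patching with the external arc, never forces the two paths to share an internal vertex, and that the $\pm 1$ and $\pm 2$ contributions of the arc $(x,y)$ and the cut-vertex junctions push the shorter path to length exactly $\ell$ or more rather than $\ell-1$. Because every cycle in $\mathcal{T}$ has length $\ge 2k-2$, there is a lot of slack: the long side of any cycle alone already exceeds $k+\ell$, so after splitting off a path of length $\ell$ on one side there remains a path of length $\ge k$ on the other. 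This slack is what makes the contradiction go through, and I would phrase the argument to extract exactly this ``one cycle of $\Lambda$ is long enough to host both blocks'' phenomenon, reducing the multi-cycle case to essentially the single-cycle case plus a clean concatenation of directed sub-paths along the cut vertices.
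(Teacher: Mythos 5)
Your plan covers only the easy half of the argument and rests on a false inequality. You repeatedly use ``$2k-2\ge k+\ell$'' (so that one cycle of $\Lambda_{\mathcal{T}}(C_x,C_y)$ can host a block of length $k$ on one side and a block of length $\ell$ on the other). But the hypotheses are only $k\ge\ell\ge 1$, so when $\ell\in\{k-1,k\}$ we have $2k-2<k+\ell$ and the claimed ``slack'' disappears. Concretely, take $t=1$, both cycles of length exactly $2k-2$ with $k=\ell$, and $x,y$ placed so that $|x\mathcal{T}v|=|v\mathcal{T}y|=k-1$ and $|v\mathcal{T}x|=|y\mathcal{T}v|=k-1=\ell-1$: then no splicing of $(x,y)$ with paths of $\mathcal{T}$ produces a $c(k,\ell)$, yet the lemma's conclusion is violated. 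So the strategy ``assume $|v\mathcal{T}x|\ge\ell-1$ and always exhibit a $c(k,\ell)$'' cannot be completed.

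The missing idea is that in this residual case (where $|v\mathcal{T}y|\le k-1$ and $|x\mathcal{T}v|\le k-1$, so neither block-pairing works) the contradiction does not come from forbidden subdigraphs at all, but from the \emph{maximality of the contracted cycles}: the cycle $(x,y)+y\mathcal{T}x$ has length $\sum_i|C_i|+1-|x\mathcal{T}v|-|v\mathcal{T}y|\ge\gamma+(2k-2)+1-2(k-1)>\gamma$, where $\gamma=\max_i|C_i|$; choosing $j$ minimal with $|C^{(j)}|=\gamma$, this cycle survives into $D^{(j)}$ and beats the longest cycle $C^{(j)}$ there, a contradiction. This step genuinely needs the contraction history of $F$ (via the remark after Lemma~\ref{lem:cycle-tree}), not just the cycle-tree structure, and your proposal never invokes it. Two smaller issues: the bound $|y\mathcal{T}u|\le\ell-2$ is not obtained ``by symmetry'' in the paper --- it uses the already-proved bound $|u\mathcal{T}x|\le|v\mathcal{T}x|\le\ell-2$ to force $|C_0|<2k-2$ --- and your geometric setup is off, since for $t\ge2$ the paths $v\mathcal{T}x$ and $y\mathcal{T}u$ traverse all intermediate cycles rather than staying inside $C_x$ and $C_y$; minimality of $|\Lambda_{\mathcal{T}}(C_x,C_y)|$ only guarantees $x,y\notin\{u,v\}$.
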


\begin{proof}
Let $\Lambda_{\mathcal{T}}(C_x,C_y): C_0, C_1,\ldots, C_t$ for some $t\ge 1$.
By Lemma~\ref{lem:cycle-tree}, the length of each $C_i$ is from ${L}$ and at least $2k-2$.
Let $\gamma$ be the maximum of $|C_i|$ over $0\le i\le t$.
Since $t\ge 1$, we have $\sum_{i=0}^{t}|C_i|\ge \gamma + (2k-2).$

Let $j\in [m]$ be the minimum integer such that $|C^{(j)}|=\gamma$.
By the minimality, we point out that all contracting cycles $C^{(i)}$ obtained
before $D^{(j)}$ have lengths strictly bigger than $\gamma$.
By the remark after Lemma~\ref{lem:cycle-tree},
this also shows that all cycles in $\Lambda_{\mathcal{T}}(C_x,C_y)$ have not been contracted before $D^{(j)}$,
and as a result, $D^{(j)}$ contains the induced subdigraph $H$ of $F$ restricted on $\Lambda_{\mathcal{T}}(C_x,C_y)$.

Note that by the choice of $C_x$ and $C_y$, $x\neq u,v$ and $y\neq u,v$.
We first prove that $|v\mathcal{T}x|\le \ell-2$.
Suppose for a contradiction that $|v\mathcal{T}x|\ge \ell-1$.
If $|v\mathcal{T} y|\ge k$, then $v\mathcal{T}x+(x,y)$ and $v\mathcal{T}y$
are internally disjoint paths from $v$ to $y$ of length at least $\ell$ and $k$,
giving a 2-block cycle $c(k,\ell)$, a contradiction.
Hence, $|v\mathcal{T} y|\le k-1$, implying that $|y\mathcal{T}v|\ge \ell-1$ (as $C_t=v\mathcal{T}y+y\mathcal{T}v$).
If $|x\mathcal{T}v|\ge k$, then the paths $(x,y)+y\mathcal{T}v$ and $x\mathcal{T}v$ generate a 2-block cycle $c(k,\ell)$ in $D$, a contradiction.
So $|x\mathcal{T}v|\le k-1$.
Let $C=(x,y)+ y\mathcal{T}x$. Then $C$ is a cycle with length
\begin{eqnarray*}
|C| &=&\sum_{i=0}^{t}|C_i| +1 - |x\mathcal{T}v| - |v\mathcal{T} y| \ge \gamma+(2k-2) +1 -(k-1)-(k-1)>\gamma.
\end{eqnarray*}
Note that $C$ is also a cycle of $H$ and also a cycle of $D^{(j)}$.
However, this is a contradiction, as $C^{(j)}$ has length $\gamma$ and is a longest cycle in $D^{(j)}$.
Therefore, it cannot happen that $|v\mathcal{T}x|\ge \ell-1$, and so  $|v\mathcal{T}x|\le \ell-2$.

We then show that $|y\mathcal{T}u|\le \ell-2$. Suppose not that $|y\mathcal{T}u|\ge \ell-1$.
Then the paths $(x,y)+y\mathcal{T}u$ has length at least $\ell$.
To avoid a 2-block cycle $c(k,\ell)$, we must have $|x\mathcal{T}u|\le k-1$.
But we also have $|u\mathcal{T}x|\le |v\mathcal{T} x|\le \ell-2$.
Therefore $|C_0|=|x\mathcal{T} u|+|u\mathcal{T}x|\le (k-1) + (\ell-2) < 2k-2,$
a contradiction. This completes the proof.
\end{proof}

\noindent{\bf Remark.} From the proof of Lemma~\ref{lem:short-backward-path}, we know that
if $F$ has an external arc $(x,y)$, then it cannot happen that $|v\mathcal{T}x|\ge \ell-1$.
This implies that if $F$ has an external arc, then $\ell \ge 2$.

\subsection{Coloring $F$} \label{ssec:cycle-tree:coloring}

In this subsection, our goal is to find a proper coloring of $F$ using $O(k+\ell)$ colors,
which completes the proof of Theorem~\ref{thm:main:general}.
Recall that $\mathcal{T}$ is the spanning cycle-tree of $F$ (fixed from Lemma~\ref{lem:cycle-tree}).
Fix a cycle-tree ordering $C_0,C_1,\ldots,C_n$ of $\mathcal{T}$.
For simplicity, we write $\Lambda_{\mathcal{T}}(C_i,C_j)$ as $\Lambda(C_i,C_j)$.

We define two spanning subdigraphs $F^{(1)}$ and $F^{(2)}$ of $F$ such that $A(F)=A(F^{(1)})\cup A(F^{(2)})$.
Before processing, we need to introduce some notations.
Let $C$ be a cycle in $\mathcal{T}$ with $C\neq C_0$.
We call the second last cycle in $\Lambda(C_0,C)$ the \textit{parent} of $C$ and denote it as $p(C)$.
The unique vertex $p\in V(C)\cap V(p(C))$ is called the \textit{parent vertex} of $C$.
Note that the notions of the parent cycle and the parent vertex are uniquely defined for all cycles of $\mathcal{T}$ except $C_0$.
For every $v\in V(F)$, let $C_v$ be the cycle of $\mathcal{T}$ containing $v$ which has the shortest cycle-path to the cycle $C_0$.

We then define a function $\phi: V(\mathcal{T})\to \{0,1\}$ by letting
\[
 \phi(v)=\left\{\begin{array}{ll} 1  &\text{ if }C_v\neq C_0 \text{ and }  |v C_v p_v|(=|v\mathcal{T}p_v|)\le \ell-2,\\
 0&\text{ otherwise }\end{array}\right.\]
where $p_v$ is the parent vertex of $C_v$, respectively.
Let $F^{(2)}$ be the spanning subdigraph of $F$ such that
\[A(F^{(2)})=\{ (u,v) \mid (u,v)\text{ is an external arc of }F\text{ and }  \phi(u)\neq \phi(v)\}. \]
Let $F^{(1)}$ be the spanning subdigraph of $F$ such that $A(F^{(1)})=A(F)\setminus A(F^{(2)})$.
Clearly, $\phi$ is a proper coloring of $F^{(2)}$ and thus we have
\begin{align}\label{equ:F^2}
\chi(F^{(2)})\le 2.
\end{align}

To complete the proof of Theorem~\ref{thm:main:general},
it suffices (as we should see later) to show $\chi(F^{(1)})\le k+2\ell-1$.
This will be accomplished in Lemma~\ref{lem:degenerate} which in fact provides a slightly stronger result.
In what follows, we first prove a useful lemma, showing that the external arcs of $F^{(1)}$
satisfy some tree-like property.\footnote{It may help understand the proof if one analogizes this as the property
of the depth-first-search tree.}
For an external arc $(u,v)$ of $F$, we say $(u,v)$ is \textit{comparable} if either $C_u$ is a cycle in $\Lambda(C_v,C_0)$ or $C_v$ is a cycle in $\Lambda(C_u,C_0)$.

\begin{lemma}\label{lem:F^1-comparable}
All external arcs of $F^{(1)}$ are comparable.
\end{lemma}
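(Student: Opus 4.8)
\textbf{Proof proposal for Lemma~\ref{lem:F^1-comparable}.}

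The plan is to argue by contradiction: suppose $(u,v)$ is an external arc of $F^{(1)}$ which is not comparable. Since $(u,v)\in A(F^{(1)})$ we have $\phi(u)=\phi(v)$; since $(u,v)$ is external, the Remark after Lemma~\ref{lem:short-backward-path} gives $\ell\ge 2$, and Lemma~\ref{lem:short-backward-path} controls the backward path $y\mathcal{T}x$ through the cycle-path $\Lambda_{\mathcal{T}}(C_x,C_y)$ joining the cycles containing the two endpoints. So the strategy is to translate ``not comparable'' into a statement about where the two endpoints sit on $\mathcal{T}$, and then show that this, combined with $\phi(u)=\phi(v)$ and the length bounds on the backward path, produces a cycle longer than some $C^{(j)}$ in the corresponding digraph $D^{(j)}$, or else a $c(k,\ell)$ directly --- exactly the two kinds of contradiction used in the proof of Lemma~\ref{lem:short-backward-path}.

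First I would set up notation mirroring Lemma~\ref{lem:short-backward-path}: let $C_u, C_v$ be the cycles of $\mathcal{T}$ containing $u$ and $v$ with $|\Lambda_{\mathcal{T}}(C_u,C_v)|$ minimum, write this cycle-path as $C_0', C_1',\ldots, C_t'$, and let $u^*$ (resp. $v^*$) be the common vertex of the first (resp. last) two cycles of this cycle-path. Non-comparability of $(u,v)$ means neither $C_u$ lies on $\Lambda(C_v,C_0)$ nor $C_v$ lies on $\Lambda(C_u,C_0)$; equivalently the cycle-path $\Lambda_{\mathcal{T}}(C_u,C_v)$ ``turns'' at some cycle $C_q'$ ($0\le q\le t$) which is strictly closer to $C_0$ than both $C_u$ and $C_v$ --- in particular $t\ge 1$, and at the turning cycle the two branches leave through distinct vertices. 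The key point I want to extract is that at $C_q'$, the incoming arc of the path $u^*\mathcal{T}\cdots$ heading toward $u$ and the analogous arc heading toward $v$ enter/leave $C_q'$ at two different vertices, and crucially the parent vertices $p_u$ of $C_u$ and $p_v$ of $C_v$ lie on these respective branches away from $C_0$, so the definition of $\phi$ applies ``locally'' along each branch.

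The heart of the argument will be a counting/length estimate. Using Lemma~\ref{lem:short-backward-path} I get $|v^*\mathcal{T}u|\le \ell-2$ and $|v\mathcal{T}u^*|\le \ell-2$ (after matching up the roles of $x,y$ with $u,v$), i.e. the ``backward'' portions of $C_0'$ and $C_t'$ are short. The condition $\phi(u)=\phi(v)$ says that along the branch toward $u$, the segment $u\mathcal{T}p_u$ is short ($\le \ell-2$) or long ($\ge \ell-1$), and the same dichotomy holds for $v$; I would split into the cases $\phi(u)=\phi(v)=1$ and $\phi(u)=\phi(v)=0$. In each case I assemble a closed walk in $F$ consisting of the external arc $(u,v)$, the backward tree-path $v\mathcal{T}u$ (which passes through $v^*$, the turning cycle $C_q'$, and $u^*$), and then I either find two internally disjoint paths of lengths $\ge k$ and $\ge\ell$ between suitable vertices --- forcing a $c(k,\ell)$, contradicting Proposition~\ref{prop:D^i} --- or I close up a cycle whose length exceeds $\gamma:=\max_i|C_i'|$, contradicting the maximality of the appropriate $C^{(j)}$ via the same ``choose $j$ minimal with $|C^{(j)}|=\gamma$, all of $H$ survives in $D^{(j)}$'' device from Lemma~\ref{lem:short-backward-path}. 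The arithmetic is of the same flavour as there: $\sum|C_i'|\ge \gamma+(2k-2)$, each ``short'' detour costs at most $\ell-2\le k-2$, each ``avoid-$c(k,\ell)$'' bound costs at most $k-1$, and $2(k-1)+\text{short} < \gamma+(2k-2)+1$, so the reconstructed cycle is strictly longer than $\gamma$.

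I expect the main obstacle to be bookkeeping at the turning cycle $C_q'$: making precise how non-comparability forces the two branches to leave $C_q'$ at distinct vertices, and verifying that the relevant parent vertices $p_u,p_v$ and the path-segments $u\mathcal{T}p_u$, $v\mathcal{T}p_v$ referenced in the definition of $\phi$ really do lie on the branches of $\Lambda_{\mathcal{T}}(C_u,C_v)$ away from $C_0$ (so that ``$\phi(u)=\phi(v)$'' can be fed into the length estimate along those branches rather than elsewhere in $\mathcal{T}$). Once the geometry at $C_q'$ is pinned down, the case analysis and the cycle-length computations should go through essentially as in Lemma~\ref{lem:short-backward-path}, using only Lemma~\ref{lem:cycle-tree}, Proposition~\ref{prop:C^i}, Proposition~\ref{prop:D^i}, and the already-proved Lemma~\ref{lem:short-backward-path}.
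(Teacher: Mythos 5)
Your setup is exactly right and matches the paper's: non-comparability forces the cycle-path $\Lambda_{\mathcal{T}}(C_u,C_v)$ to start by going through $p(C_u)$ and end by coming from $p(C_v)$, so your $u^*$ and $v^*$ are precisely the parent vertices $p_u$ and $p_v$, and Lemma~\ref{lem:short-backward-path} applied to the arc $(u,v)$ gives $|p_v\mathcal{T}u|\le \ell-2$ and $|v\mathcal{T}p_u|\le \ell-2$. Where you diverge is the closing step: you propose to split on $\phi(u)=\phi(v)\in\{0,1\}$ and then rebuild a long cycle or a $c(k,\ell)$ from scratch at the turning cycle. This is unnecessary, and it obscures the fact that the two bounds you have already extracted finish the proof on their own. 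If $\phi(u)=1$ then $|uC_up_u|\le\ell-2$, so the path $p_v\mathcal{T}u$, which must enter $C_u$ through $p_u$, has length at least $|p_uC_uu|=|C_u|-|uC_up_u|\ge(2k-2)-(\ell-2)\ge k>\ell-2$, contradicting the first bound; hence $\phi(u)=0$. If $\phi(v)=0$ then $|vC_vp_v|\ge\ell-1$, so $|v\mathcal{T}p_u|\ge|vC_vp_v|>\ell-2$, contradicting the second bound; hence $\phi(v)=1$. But then $\phi(u)\ne\phi(v)$, so $(u,v)\in A(F^{(2)})$ and not in $F^{(1)}$ --- this is the paper's entire argument, and it is the real point of the lemma: the colouring $\phi$ was designed so that $F^{(2)}$ absorbs exactly the non-comparable external arcs. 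So your proposal is not wrong, but its decisive step is left as an over-engineered plan (essentially re-proving Lemma~\ref{lem:short-backward-path} in a new configuration) when a two-line consequence of that lemma suffices; note also that in your case split only one endpoint's $\phi$-value is ever used in each case, which is a sign the split itself is not the right organizing principle.
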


\begin{proof}
Suppose for a contradiction that $F^{(1)}$ contains an external arc $(u,v)$ which is not comparable.
Note that  $\ell\ge 2$ by a remark after Lemma~\ref{lem:short-backward-path}, and $C_u,C_v, C_0$ are three distinct cycles of $\mathcal{T}$.
Let $p_u$ and $p_v$ be the parent vertices of $C_u$ and $C_v$, respectively.
Also note that as $(u,v)$ is not comparable, $p_u$ and $p_v$ are the common vertices of the first and the last two cycles in $\Lambda(C_u,C_v)$, respectively.

We claim that $\phi(u)=0$ and $\phi(v)=1$.
If $\phi(u)=1$, then $|u C_u p_u|\le \ell-2$ and so
\[|p_v\mathcal{T}u|\ge |p_u\mathcal{T} u|=|p_u C_u u|=|C_u|-|uC_up_u|\ge (k+\ell-2)-(\ell-2)=k>\ell-2,\]  a contradiction to Lemma~\ref{lem:short-backward-path}. Thus $\phi(u)=0$.
If $\phi(v)=0$, then $|v C_v p_v|\ge \ell-1$ and so
\[|v\mathcal{T} p_u|\ge |v\mathcal{T}p_v|=|v C_v p_v|> \ell-2,\]
again a contradiction to Lemma~\ref{lem:short-backward-path}.
Thus $\phi(v)=1$.

Therefore, $\phi(u)\neq \phi(v)$, implying that $(u,v)\notin F^{(1)}$. This completes the proof.
\end{proof}

Recall that we have fixed a cycle-tree ordering $C_0,C_1,\ldots,C_n$ of $\mathcal{T}$.
For every $i\in [n]$, let $p_i$ be the parent vertex of $C_i$.
And for every $i\in \{0,1,...,n\}$, let $F_i$ be the induced subdigraph of $F^{(1)}$
restricted on $V(C_0)\cup V(C_1)\cup ...\cup V(C_i)$.
Note that $F_0=F[V(C_0)]$ and $F_n=F^{(1)}$.

\begin{lemma}\label{lem:F^1-degree}
For every $i\in [n]$ and for any $v\in V(C_i)\setminus \{p_i\}$, the number of external neighbors
\footnote{We say $u$ is an {\it external neighbor} of $v$ in $F$ if $(u,v)$ or $(v,u)$ is an external arc of $F$.}
of $v$ in $F_i$ is at most $\max\{0,\ell-2\}$.
\end{lemma}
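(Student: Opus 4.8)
\textbf{Proof proposal for Lemma~\ref{lem:F^1-degree}.}

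The plan is to fix $i\in[n]$ and $v\in V(C_i)\setminus\{p_i\}$, and bound the number of external neighbors of $v$ inside $F_i$, i.e.\ those $u\in V(C_0)\cup\dots\cup V(C_i)$ such that $(u,v)$ or $(v,u)$ is an external arc of $F^{(1)}$. First I would record the basic structural fact that $C_v=C_i$: since $v\in V(C_i)$ and $v\neq p_i$, the cycle $C_i$ is the cycle through $v$ closest (in cycle-path distance) to $C_0$, so in the definition of $\phi$ we have $p_v=p_i$. Next I would use Lemma~\ref{lem:F^1-comparable}: every external arc of $F^{(1)}$ is comparable, so for any external neighbor $u$ of $v$ in $F^{(1)}$, either $C_u$ lies on $\Lambda(C_v,C_0)=\Lambda(C_i,C_0)$ or $C_v=C_i$ lies on $\Lambda(C_u,C_0)$. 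Since $u\in F_i$ means $C_u$ is one of $C_0,\dots,C_i$, and the cycle-tree ordering forces $\Lambda(C_u,C_0)\subseteq\{C_0,\dots,C_i\}$ with $C_i$ as an end-cycle only when $C_u$ is a ``descendant'' of $C_i$ which cannot have appeared yet in the ordering — so in fact $C_u$ must lie on $\Lambda(C_i,C_0)$. Thus $u$ and $v$ are separated along $\mathcal{T}$ by the parent vertex $p_i$, and either $p_i\in V(u\mathcal{T}v)$ or $p_i\in V(v\mathcal{T}u)$.

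Now I would apply Lemma~\ref{lem:short-backward-path} to each external arc incident to $v$. Consider first an external arc $(v,u)$ with $u\in F_i$: here $x=v$, $y=u$, and since $C_u$ lies strictly before $C_i$ on the path to $C_0$, the vertex $v\neq p_i$ lies on the first end-cycle $C_i$ of $\Lambda(C_v,C_u)$, so the ``$u$'' of that lemma (the common vertex of the first two cycles) is exactly $p_i$; the lemma then gives $|u\mathcal{T}p_i|\le\ell-2$... wait, more precisely it gives $|y\mathcal{T}u_{\mathrm{lem}}|\le\ell-2$, i.e.\ the backward path from $u$ back to $p_i$ through $\mathcal{T}$ has length at most $\ell-2$. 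Symmetrically, for an external arc $(u,v)$ with $u\in F_i$ we have $x=u$, $y=v$, and now $v$ lies on the last end-cycle $C_y=C_i$, so the ``$v$'' of the lemma is again $p_i$, giving $|v\mathcal{T}p_i|\le\ell-2$; but this bounds a quantity depending only on $v$, not on $u$, so to bound the count I instead use that $u$ lies on $\Lambda(C_v,C_0)$ and feed the arc into the lemma the other way, concluding $|p_i\mathcal{T}u|\le\ell-2$ or $|u\mathcal{T}p_i|\le\ell-2$ as appropriate. The upshot I am aiming for: every external neighbor $u$ of $v$ in $F_i$ lies within distance $\ell-2$ of $p_i$ along the relevant directed path of $\mathcal{T}$, hence on a single path of length at most $\ell-2$ emanating from $p_i$; such a path contains at most $\ell-2$ vertices other than $p_i$, and $p_i\neq u$ since $p_i$ is not an endpoint of an external arc at $v$ by the comparability bookkeeping — giving at most $\ell-2$ external neighbors when $\ell\ge2$, and none when $\ell\le 1$ (consistent with the $\max\{0,\ell-2\}$ in the statement, recalling from the remark after Lemma~\ref{lem:short-backward-path} that external arcs force $\ell\ge 2$ anyway).

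The delicate point — and what I expect to be the main obstacle — is the careful case analysis of \emph{which} end-cycle of $\Lambda(C_u,C_v)$ contains $v$ and whether the relevant separating vertex is genuinely $p_i$, handling separately the cases $C_u=C_0$, $C_u$ strictly between $C_0$ and $C_i$, and the orientation of the external arc; one must also rule out double-counting, i.e.\ that an external neighbor $u$ is not reached both ``from the $u$-side'' and ``from the $v$-side'' of Lemma~\ref{lem:short-backward-path}. I would organize this by first showing that all such $u$ lie on $V(p_i\mathcal{T}u)\cup V(u\mathcal{T}p_i)\setminus\{p_i\}$ within the designated short segment, and then noting these short segments (forward from $p_i$, or backward to $p_i$) together still form at most one directed path of $\le\ell-2$ interior vertices because $v$'s position on $C_i$ fixes the orientation in which the lemma applies. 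Once that is pinned down, the count $\le\ell-2$ is immediate, and the $\max\{0,\ell-2\}$ handles small $\ell$ vacuously.
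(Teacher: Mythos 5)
Your setup coincides with the paper's: by Lemma~\ref{lem:F^1-comparable} every external neighbor of $v$ inside $F_i$ lies on $\Lambda(C_i,C_0)$, and Lemma~\ref{lem:short-backward-path} applied to each external arc at $v$ yields $|u\mathcal{T}p_i|\le\ell-2$ for out-neighbors and $|p_i\mathcal{T}u|\le\ell-2$ for in-neighbors. But the two points you yourself flag as ``delicate'' are precisely the content of the lemma, and you do not supply them. First, the double-counting issue is real: a priori $v$ could have up to $\ell-2$ out-neighbors and $\ell-2$ in-neighbors, giving $2(\ell-2)$. The paper rules this out by observing that an out-external arc $(v,u)$ forces (via Lemma~\ref{lem:short-backward-path}) $|p_iC_iv|\le\ell-2$, hence $|vC_ip_i|\ge(2k-2)-(\ell-2)>\ell-2$ and $\phi(v)=0$, whereas an in-external arc $(u,v)$ forces $|vC_ip_i|\le\ell-2$ and $\phi(v)=1$; so the sets $S^+(v)$ and $S^-(v)$ of out- and in-neighbors cannot both be nonempty. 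Your remark that ``$v$'s position on $C_i$ fixes the orientation'' gestures at this but is not an argument.

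Second, and more seriously, the bound $|u\mathcal{T}p_i|\le\ell-2$ for every $u\in S^+(v)$ does \emph{not} by itself give $|S^+(v)|\le\ell-2$: in a cycle-path the set of vertices admitting a directed path of length at most $\ell-2$ to $p_i$ is a reverse out-tree (at each junction vertex one may branch into either arc of the next cycle), not a single directed path, so these vertices need not be nested. The paper's Claim~1 pins $S^+(v)$ to the single path $w^-\mathcal{T}p_i$ exactly by using that $(v,u)\in A(F^{(1)})$ forces $\phi(u)=\phi(v)=0$, i.e.\ $|uC_up_u|>\ell-2$, which excludes every candidate $u$ sitting on the ``wrong side'' of its cycle; only then does choosing the farthest such $u$ give $S^+(v)\subseteq V(u\mathcal{T}p_i)\setminus\{p_i\}$ and hence the count $\le\ell-2$ (and symmetrically for $S^-(v)$ via $w^+$). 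Your proposal never invokes the defining property of $F^{(1)}$ --- that its external arcs join vertices with equal $\phi$-value --- which is the entire reason the arcs of $F^{(2)}$ were removed before stating this lemma; without that ingredient the argument does not close.
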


\begin{proof} If $F$ has no external arc, then it is trivial. Suppose that $F$ has an external arc. Then $\ell\ge 2$ by the remark after Lemma~\ref{lem:short-backward-path}.
Fix $i\in [n]$ and $v\in V(C_i)\setminus\{p_i\}$.
Let $\Lambda=\Lambda(C_i,C_0)$. By the definition of the cycle-tree ordering,
it follows that all cycles in $\Lambda$ are contained in $V(F_i)$.
In view of Lemma~\ref{lem:F^1-comparable}, we see that all external arcs of $F_i$ between $V(C_i)$ and $V(F_{i-1})$ are those
between $V(C_i)$ and $\Lambda\setminus V(C_i)$.
Therefore, to prove the lemma, it suffices to show that the number of external neighbors of $v$ in $\Lambda$ is at most $\ell-2 $.

Let $S^+(v)$ be the set of vertices $u$ on $\Lambda$ such that $(v,u)$ is an external arc of $F^{(1)}$, and let $S^-(v)$ be the set of vertices $u$ on $\Lambda$ such that $(u,v)$ is an external arc of $F^{(1)}$.
Note that the parent vertex $p_i$ of $C_i$ is the common vertex of the first two cycles in $\Lambda$.
Let $w$ be the common vertex of the last two cycles in $\Lambda$. So $w\in V(C_0)$. Let $w^+$ and $w^-$ be the vertices of $C_0$ such that $|wC_0w^+|=\ell-2$ and $|w^-C_0w|=\ell-2$ (see Figure~\ref{fig:F^1-degree}).

\begin{figure}
\centering
 \begin{tikzpicture} [scale=0.8, >=triangle 45,mydeco/.style = {decoration = {markings,mark = at position #1 with {\arrow{<}}}} ]
     \path (2,0) coordinate (0) (4,0) coordinate (1)  (7,0) coordinate (2)
(9,0) coordinate (3)
(8.4,0.8) coordinate (u)
(11,0) coordinate (4)
(14,0) coordinate (5)
(16,0) coordinate (6)
(3,0) coordinate (p)
(8,0) coordinate (z)
(10,0) coordinate (pu)
(15,0) coordinate (w) (15.4,0.8) coordinate (w+) (15.4,-0.8) coordinate (w-) (1.4,0.8) coordinate (v) ;
     \draw (0) circle (1cm);
     \draw  (1) circle (1cm) ;
     \draw (2) circle (1cm) ;
     \draw (3) circle (1cm);
     \draw (4) circle (1cm);
     \draw (5) circle (1cm);
     \draw (6) circle (1cm);
\fill (0) node[right]{\footnotesize$C_i$}
(3) node[right]{\footnotesize$C_u$}
(6) node[right]{\footnotesize$C_0$}
(v) node[left]{\footnotesize$v$}
(u) node[left]{\footnotesize$u$}
             (w) node[left]{\footnotesize$w$}
             (w-) node[right]{\footnotesize$w^-$}
             (w+) node[right]{\footnotesize$w^+$}
             (p) node[right]{\footnotesize$p_i$}
             (z) node[right]{\footnotesize$z$}
                 (pu) node[right]{\footnotesize$p_u$};
       \fill (w) circle (3pt) (z) circle (3pt)  (pu) circle (3pt)
              (w+) circle (3pt)  (w-) circle (3pt)
             (p) circle (3pt) (v) circle (3pt)  (u) circle (3pt)
             (5.3,0) circle (1pt) (5.5,0) circle (1pt) (5.7,0) circle (1pt)
                     (12.3,0) circle (1pt) (12.5,0) circle (1pt) (12.7,0) circle (1pt);
    \draw[postaction = {mydeco=-0.4 ,decorate}, postaction = {mydeco=0.2 ,decorate}] (0) circle (1cm);
    \draw[postaction = {mydeco=-0.4 ,decorate}, postaction = {mydeco=0.2 ,decorate}] (1) circle (1cm);
    \draw[postaction = {mydeco=-0.4 ,decorate}, postaction = {mydeco=0.2 ,decorate}] (2) circle (1cm);
    \draw[postaction = {mydeco=-0.4 ,decorate}, postaction = {mydeco=0.2 ,decorate}] (3) circle (1cm);
    \draw[postaction = {mydeco=-0.4 ,decorate}, postaction = {mydeco=0.2 ,decorate}] (4) circle (1cm);
    \draw[postaction = {mydeco=-0.4 ,decorate}, postaction = {mydeco=0.2 ,decorate}] (5) circle (1cm);
    \draw[postaction = {mydeco=-0.4 ,decorate},
             postaction = {mydeco=0.45 ,decorate}, postaction = {mydeco=0.2 ,decorate}] (6) circle (1cm);
\end{tikzpicture}
\caption{$\Lambda:=\Lambda(C_i,C_0)$}
\label{fig:F^1-degree}\end{figure}

\bigskip

{\bf Claim 1:} If $S^+(v)\neq \emptyset$, then $\phi(v)=0$ and $|S^+(v)|\le \ell-2$.

\begin{proof}[Proof of Claim 1.] Suppose that we take any $u\in S^+(v)$.
We first prove that
\begin{align}\label{equ:S^+}
|u\mathcal{T}p_i|\le \ell-2 \text{  and  } \phi(v)=\phi(u)=0.
\end{align}

\noindent
To see this, note that $\Lambda(C_i,C_u)$ is a subpath of $\Lambda$.
So $p_i$ is the common vertex of the first two cycles in $\Lambda(C_i,C_u)$.
Let $z$ be the common vertex of the last two cycles in $\Lambda(C_i,C_u)$.
By Lemma~\ref{lem:short-backward-path}, we have $|u\mathcal{T} p_i|\le \ell-2$
and $|p_iC_iv|=|p_i\mathcal{T} v|\le |z\mathcal{T}v|\le \ell-2$, implying that $|vC_ip_i|\ge 2k-2-(\ell-2)>\ell-2$ and so $\phi(v)=0$.
Since $(v,u)$ is an external arc in $F^{(1)}$, we have $\phi(u)=\phi(v)=0$.

We then assert that $S^+(v)\subseteq V(w^- \mathcal{T} p_i)$.
Otherwise, there exists some $u\in S^+(v)$ such that $u\not\in V(w^- \mathcal{T} p_i)$.
If $u\in V(C_0)$, then by the definition of $w^-$, we have $|u\mathcal{T} p_i|>|w^-C_0w|=\ell-2$,
a contradiction to \eqref{equ:S^+}.
So $C_u\neq C_0$ and the parent vertex $p_u$ of $C_u$ is well-defined.
To have $u\not\in V(w^-\mathcal{T} p_i)$ and $\phi(u)=0$,
we must have $u\in zC_up_u$ and $|uC_up_u|>\ell-2$, implying that
$|u\mathcal{T} p_i| \ge |u C_up_u| >\ell-2,$
again a contradiction to \eqref{equ:S^+}. This proves the assertion.

Now let $u\in S^+(v)$ be the farthest vertex from $p_i$ in the path $w^-\mathcal{T} p_i$.
Then $S^+(v)\subseteq V(u\mathcal{T}p_i)\setminus\{p_i\}$. By \eqref{equ:S^+},
$|S^+(v)|\le |V(u\mathcal{T}p_i)\setminus \{p_i\}|=|u\mathcal{T}p_i|\le \ell-2$, proving Claim~1.
\end{proof}

\medskip

{\bf Claim 2:} If $S^-(v)\neq \emptyset$, then $\phi(v)=1$ and $|S^-(v)|\le \ell-2$.

\begin{proof}[Proof of Claim 2.]
This will be similar to Claim~1.
We first prove that for any $u\in S^-(v)$,
\begin{align}\label{equ:S^-}
|p_i\mathcal{T}u|\le \ell-2 \text{  and  } \phi(v)=\phi(u)=1.
\end{align}
It is clear that the common vertex of the last two cycles in $\Lambda(C_u,C_i)$ is $p_i$.
Let $z$ be the common vertex of the first two cycles in $\Lambda(C_u,C_i)$.
Then by Lemma~\ref{lem:short-backward-path}, we have $|p_i\mathcal{T} u| \le \ell-2$ and $|v\mathcal{T} p_i|\le |v\mathcal{T}z|\le \ell-2$,
the latter of which implies that $\phi(v)=1$.
Since $(u,v)$ is an external arc in $F^{(1)}$, $\phi(u)=\phi(v)=1$.

Next we show $S^-(v)\subseteq V(p_i\mathcal{T}w^+)$.
Suppose not, then there exists some $u\in S^-(v)$ such that $u\not\in V(p_i\mathcal{T}w^+)$.
If $u\in V(C_0)$ (so $u\in w^+C_0w$), then by the definition of $w^+$, $|p_i\mathcal{T}u|>|wC_0w^+|=\ell-2$,
a contradiction to \eqref{equ:S^-}.
Thus, $C_u\neq C_0$ and the parent vertex $p_u$ is well-defined.
To have $\phi(u)=1$ and $u\not\in p_i\mathcal{T} w^+$,
it must hold that $u\in p_uC_uz$ and $|uC_uz| \le |uC_u p_u| \le \ell-2$,
implying that
\[|p_i\mathcal{T}u|\ge|zC_u u|=|C_u|-|uC_uz|\ge (2k-2)-(\ell-2)>\ell-2,\]
a contradiction to \eqref{equ:S^-}.

Let $u\in S^-(v)$ be the farthest vertex from $p_i$ in the path $p_i\mathcal{T} w^+$.
Then $S^-(v)\subseteq V(p_i\mathcal{T}u)\setminus\{p_i\}$. By~\eqref{equ:S^-},
$|S^-(v)|\le |p_i\mathcal{T}u|\le \ell-2$. This proves Claim~2.
\end{proof}

Claims 1 and 2 also show that at most one of $S^+(v)$ and $S^-(v)$ can be non-empty.
Therefore, since the number of external neighbors of $v$ in $\Lambda$ is $\max\{|S^+(v)|,|S^-(v)|\}$, it is at most $\ell-2$.
We have completed the proof of Lemma~\ref{lem:F^1-degree}.
\end{proof}

\begin{lemma}\label{lem:degenerate}
$F^{(1)}$ is $(k+2\ell-2)$-degenerate.
\end{lemma}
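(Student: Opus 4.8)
The plan is to show directly that every induced subdigraph of $F^{(1)}$ on a ``downward closed'' vertex set has a vertex of low degree, by exploiting the cycle-tree ordering $C_0,C_1,\ldots,C_n$ and peeling off the last cycle. More precisely, recall the filtration $F_0\subseteq F_1\subseteq\cdots\subseteq F_n=F^{(1)}$, where $F_i$ is induced on $V(C_0)\cup\cdots\cup V(C_i)$. The key observation is that $F_i$ is obtained from $F_{i-1}$ by attaching the cycle $C_i$ along its single parent vertex $p_i$; so the ``new'' vertices $V(C_i)\setminus\{p_i\}$ all sit on one cycle of length $|C_i|\ge 2k-2$, and in the underlying graph each such vertex $v$ has exactly two cycle-neighbours on $C_i$ plus, by Lemma~\ref{lem:F^1-degree}, at most $\max\{0,\ell-2\}$ external neighbours inside $F_i$. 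Hence every $v\in V(C_i)\setminus\{p_i\}$ has degree at most $2+\max\{0,\ell-2\}$ in $F_i$; this is at most $k+2\ell-2$ whenever $\ell\ge 2$ (using $k\ge\ell\ge 2$, so $2+\ell-2=\ell\le k\le k+2\ell-2$) and at most $2\le k+2\ell-2$ when $\ell=1$ (since then $k\ge 2$).

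To turn this into $(k+2\ell-2)$-degeneracy, I would take an arbitrary nonempty subdigraph $H$ of $F^{(1)}$ (equivalently, of its underlying graph) and let $i$ be the largest index such that $V(H)\cap(V(C_i)\setminus\{p_i\})\ne\emptyset$; such an $i$ exists because $\bigcup_{i\ge 1}(V(C_i)\setminus\{p_i\})$ together with $V(C_0)$ covers $V(F^{(1)})$, and if the only vertices of $H$ lie in $V(C_0)$ we handle that case separately. Pick any $v\in V(H)\cap(V(C_i)\setminus\{p_i\})$. Every neighbour of $v$ in $F^{(1)}$ lies in $F_i$ (a neighbour in some later $C_j$ with $j>i$ would, by maximality of $i$, have to be the parent vertex $p_j$, but $v\notin V(C_j)$ unless $v=p_j$, and a short argument with the cycle-tree structure rules this out — here I would invoke that all external arcs of $F^{(1)}$ are comparable, Lemma~\ref{lem:F^1-comparable}, to control where neighbours of $v$ can be). Therefore $v$ has at most $k+2\ell-2$ neighbours in $F_i$, hence at most that many in $H$. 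The remaining case, where $V(H)\subseteq V(C_0)$, is immediate: $C_0$ is a single cycle of length $\ge 2k-2$ with no external arcs among its own vertices (external arcs within one cycle do not exist by definition), so in $F^{(1)}[V(C_0)]=F_0$ every vertex has degree at most $2\le k+2\ell-2$.

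I expect the main obstacle to be the bookkeeping in the claim that a neighbour of $v\in V(C_i)\setminus\{p_i\}$ in $F^{(1)}$ cannot lie strictly later in the ordering. The cycle-tree ordering only guarantees $|V(C_i)\cap(\cup_{j<i}V(C_j))|=1$; it says nothing a priori about arcs going ``forward'' to $C_j$ with $j>i$. For arcs of the cycle-tree $\mathcal{T}$ itself this is clear, but $F^{(1)}$ can have additional (external) arcs, and I must check that no such external arc $(v,u)$ or $(u,v)$ with $u\in V(C_j)\setminus\{p_j\}$, $j>i$, survives in $F^{(1)}$ with $v\in V(C_i)$. This is exactly where comparability (Lemma~\ref{lem:F^1-comparable}) does the work: if such an external arc existed then one of $C_v=C_i$, $C_u=C_j$ would have to lie on the cycle-path from the other to $C_0$, forcing $i$ and $j$ to be comparable in the tree order in a way that contradicts $j>i$ together with $v\ne p_j$. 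Once that point is nailed down, the degeneracy bound follows by the peeling argument above, completing the proof of Lemma~\ref{lem:degenerate}.

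\begin{proof}[Sketch of intended proof]
We show that every nonempty subgraph of the underlying graph of $F^{(1)}$ has a vertex of degree at most $k+2\ell-2$. Let $H$ be such a subgraph. If $\ell=1$, then $F$ has no external arc by the remark after Lemma~\ref{lem:short-backward-path}, so $F^{(1)}=F=\mathcal{T}$ has maximum degree $2$ in each cycle-tree attachment and the bound $2\le k+2\ell-2$ is clear; so assume $\ell\ge 2$. Choose the largest $i\in\{0,1,\ldots,n\}$ with $V(H)$ meeting $V(C_i)\setminus\{p_i\}$ (set $i=0$ if $V(H)\subseteq V(C_0)$, interpreting $p_0$ as undefined). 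Pick $v\in V(H)\cap(V(C_i)\setminus\{p_i\})$ when $i\ge 1$, or any $v\in V(H)$ when $i=0$. Using Lemma~\ref{lem:F^1-comparable} one checks that every neighbour of $v$ in $F^{(1)}$ lies in $F_i$: a cycle-tree neighbour lies on $C_i$ or (if $v$ is a parent vertex) an earlier cycle, and an external neighbour $u$ must satisfy, by comparability, that $C_v=C_i$ lies on $\Lambda(C_u,C_0)$ or $C_u$ lies on $\Lambda(C_i,C_0)$, both of which place $u$ in $V(F_i)$ by maximality of $i$. Hence $\deg_H(v)\le\deg_{F_i}(v)$. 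In $F_i$ the vertex $v$ has at most two neighbours on $C_i$ and, by Lemma~\ref{lem:F^1-degree}, at most $\max\{0,\ell-2\}=\ell-2$ external neighbours, so $\deg_{F_i}(v)\le 2+(\ell-2)=\ell\le k\le k+2\ell-2$. Thus $H$ has a vertex of degree at most $k+2\ell-2$, so $F^{(1)}$ is $(k+2\ell-2)$-degenerate.
\end{proof}
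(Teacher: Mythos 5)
There is a genuine gap, and it is the central one: you treat the neighbours of a vertex $v\in V(C_i)\setminus\{p_i\}$ inside $V(C_i)$ as being only its two neighbours along the cycle $C_i$. But $\mathcal{T}$ is merely a \emph{spanning subdigraph} of $F$; the digraph $F$ (and hence $F^{(1)}$, which contains all non-external arcs of $F$) may contain many chords of $C_i$, i.e.\ arcs joining two vertices of the same cycle of $\mathcal{T}$ that are not arcs of that cycle. Such arcs are by definition \emph{not} external, so Lemma~\ref{lem:F^1-degree} says nothing about them, and your claimed degree bound $2+\max\{0,\ell-2\}$ is false: a single vertex of $C_i$ can have many neighbours within $V(C_i)$. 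For the same reason your $\ell=1$ case is wrong: the absence of external arcs does not give $F^{(1)}=\mathcal{T}$, only $A(F^{(1)})\supseteq A(\mathcal{T})$, and the maximum degree need not be $2$. This also breaks your choice of witness vertex: picking an \emph{arbitrary} $v\in V(H)\cap(V(C_i)\setminus\{p_i\})$ cannot work, since that particular vertex may have large degree inside $V(C_i)$.

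The missing ingredient is precisely Theorem~\ref{thm:Hamiltonian}: $F[V(C_i)]$ is a Hamiltonian digraph with no $c(k,\ell)$, hence $(k+\ell-1)$-degenerate, which yields an ordering $v_1,\ldots,v_t$ of $V(C_i)\setminus\{p_i\}$ in which each $v_j$ has at most $k+\ell-1$ neighbours among $\{v_1,\ldots,v_{j-1}\}\cup\{p_i\}$... more precisely at most $k+\ell-1$ earlier neighbours in $F[V(C_i)]$ plus the possible neighbour $p_i$. Adding the at most $\ell-2$ external neighbours in $V(F_{i-1})\setminus\{p_i\}$ from Lemma~\ref{lem:F^1-degree} gives the bound $(k+\ell-1)+(\ell-2)+1=k+2\ell-2$, and one then concatenates these orderings over $i=0,1,\ldots,n$ (equivalently, argues by induction on $i$ that $F_i$ is $(k+2\ell-2)$-degenerate) — in your peeling argument this amounts to choosing $v$ to be the vertex of $V(H)\cap(V(C_i)\setminus\{p_i\})$ that is \emph{last} in the degeneracy ordering of $F[V(C_i)]$, not an arbitrary one. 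Your reduction of an arbitrary subgraph $H$ to $F_i$ via the maximal index $i$ with $V(H)\cap(V(C_i)\setminus\{p_i\})\neq\emptyset$ is sound (and in fact does not even need Lemma~\ref{lem:F^1-comparable} for that step, since $V(H)\subseteq V(F_i)$ and $F_i$ is induced), but without the Hamiltonian degeneracy input the degree count fails.
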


\begin{proof}
If ${F}$ has no external arc, then it is clear that from Theorem~\ref{thm:Hamiltonian}, $F$ is $(k+\ell-1)$-degenerate by considering the cycles $C_n$, \ldots, $C_0$ (the reverse of the cycle-tree ordering) one by one.
In the following, we assume that $F$ has an external arc and so $\ell\ge 2$ by a remark after Lemma~\ref{lem:short-backward-path}.

We prove by induction on $i\in \{0,1,...,n\}$ that each of $F_i$ is $(k+2\ell-2)$-degenerate.
Note that this is sufficient, as $F_n=F^{(1)}$.
The base case $i=0$ follows from Theorem~\ref{thm:Hamiltonian} directly:
since $F_0=F[V(C_0)]$ is Hamiltonian with no 2-block cycle $c(k,\ell)$, $F_0$ is $(k+\ell-1)$-degenerate and thus $(k+2\ell-2)$-degenerate.

Suppose that $F_{i-1}$ is $(k+2\ell-2)$-degenerate. Consider $F_i$, which is the union of $F_{i-1}$, $F[V(C_i)]$ and the external arcs between $V(C_i)$ and $F_{i-1}$.
As $F[V(C_i)]$ is Hamiltonian with no $c(k,\ell)$, by Theorem~\ref{thm:Hamiltonian}, we know $F[V(C_i)]$ is $(k+\ell-1)$-degenerate.
So there exists a linear ordering $v_1$, $\ldots$, $v_t$ of $V(C_i)\setminus\{p_i\}$ such that for any $j\in [t]$, $v_j$ has at most $k+\ell-1$ neighbors in $F[\{v_1,...,v_{j-1}\}]$.
And Lemma~\ref{lem:F^1-degree} says that every vertex in $V(C_i)\setminus\{p_i\}$ has at most $\max\{0,\ell-2\}=\ell-2$ external neighbors in $V(F_{i-1})\setminus\{p_i\}$.
Combining the above, the ordering $v_1$, $\ldots$, $v_t$ of $V(C_i)\setminus\{p_i\}$ also satisfies that for any $j\in [t]$, $v_j$ has at most $(k+\ell-1)+(\ell-2)+1=k+2\ell-2$ neighbors in $F[V(F_{i-1})\cup \{v_1,...,v_{j-1}\}]$.
This, together with that $F_{i-1}$ is $(k+2\ell-2)$-degenerate, implies that $F_i$ is also $(k+2\ell-2)$-degenerate,
finishing the proof of Lemma~\ref{lem:degenerate}.
\end{proof}

Finally, we are ready to prove Theorem~\ref{thm:main:general}.

\begin{proof}[Proof of Theorem~\ref{thm:main:general}.]
Let $D$ be a strong digraph with no 2-block cycle $c(k,\ell)$.
Among all vertices in $V(D^{(m)})$,
choose $v\in V(D^{(m)})$ such that $F:=D[\phi^{(m)}(v)]$ has the maximum $\chi(F)$.
Define $F^{(1)}$ and $F^{(2)}$ as before.
By~\eqref{equ:F^2} and Lemma~\ref{lem:degenerate}, there exist proper colorings
$\rho_1: V(F)\to [k+2\ell-1]$ of $F^{(1)}$ and $\rho_2: V(F)\to \{0,1\}$ of $F^{(2)}$, respectively.
Define $\rho:V(F)\to [k+2\ell-1]\times \{0,1\}$ by letting for every $v\in V(F)$, $\rho(v)=(\rho_1(v),\rho_2(v))$.
Since $A(F)=A(F^{(1)})\cup A(F^{(2)})$, it is easy to verify that $\rho$ is a proper coloring of $F$,
which implies that $\chi(F)\le 2(k+2\ell-1)$.
By Lemma~\ref{lem:chi(D)}, it holds that $\chi(D)\le 2(2k-3)(k+2\ell-1)$.
\end{proof}

It will be interesting to improve the upper bound of Theorem~\ref{thm:main:general} further, for instance, to $O(k+\ell)$.
We direct interested readers to \cite{CHLN} and the survey \cite{Ha} for many related problems.


\begin{thebibliography}{99}

\bibitem{AHT}
L. Addario-Berry, F. Havet, and S. Thomass\'e,
Paths with two blocks in $n$-chromatic digraphs,
\emph{J. Combin. Theory Ser. B} 97 (2007), 620--626.

\bibitem{BW}
A. Benhocine and A. P. Wojda,
On the existence of specified cycles in a tournament,
\emph{J. Graph Theory} 7 (1983), 469--473.

\bibitem{Bondy}
J.A. Bondy,
Diconnected orientations and a conjecture of Las Vergnas,
\emph{J. London Math. Soc.} 14 (1976), 277--282.


\bibitem{Burr}
S.A. Burr,
Subtrees of directed graphs and hypergraphs, in: Proceedings of the Eleventh Southeastern Conference
on Combinatorics, Graph Theory and Computing, Boca Raton, Congr. Numer. 28 (1980), 227--239.

\bibitem{CMZ}
Z. Chen, J. Ma, and W. Zang,
Coloring digraphs with forbidden cycles,
\emph{J. Combin. Theory, Ser. B} 115 (2015), 210--223.

\bibitem{CHLN}
N. Cohen, F. Havet, W. Lochet, and N. Nisse,
Subdivisions of oriented cycles in digraphs with large chromatic number,
\emph{arXiv:1605.07762}

\bibitem{EI04}
A. El-Sahili,
Paths with two blocks in $k$-chromatic digraphs,
\emph{Discrete Math.} 287 (2004), 151--153.

\bibitem{EI04-2}
A. El-Sahili,
Trees in tournaments,
\emph{J. Combin. Theory Ser. B} 92 (2004), 183--187.

\bibitem{EK}
A. El-Sahili and M. Kouider,
About paths with two blocks, \emph{manuscript}.

\bibitem{HT91}
R. H\"aggkvist and A.G. Thomason,
Trees in tournaments,
\emph{Combinatorica} 11 (1991), 123--130.

\bibitem{Ha}
F. Havet,
Orientations and colouring of graphs, \emph{manuscript}.

\bibitem{HT00}
F. Havet and S. Thomass\'e,
Oriented Hamiltonian path in tournaments: A proof of Rosenfeld¡¯s conjecture,
\emph{J. Combin. Theory Ser. B} 78 (2000), 243--273.

\bibitem{HT00-2}
F. Havet and S. Thomass\'e,
Median orders: A tool for the second neighborhood problem and Sumner¡¯s conjecture,
\emph{J. Graph Theory} 35 (2000), 244--256.

\bibitem{Gallai}
T. Gallai, On directed paths and circuits, in: {\em Theory of Graphs} (P. Erd\H{o}s and G.O.H. Katona, Eds.),
Academic Press, San Diego, 1968, 115--118.

\bibitem{Roy}
B. Roy, Nombre chromatique et plus longs chemins d'un graphe, {\em Rev. Fran\c{c}aise Informat. Recherche
Op\'erationnelle} 1 (1967), 129--132.
\end{thebibliography}
\end{document}